\newtheorem{theorem}{Theorem}[section]
\newtheorem{proposition}[theorem]{Proposition}
\title[Even-freeness of cyclic $2$-designs]%
 {Even-freeness of cyclic $2$-designs} %
\author{Yuichiro Fujiwara}
\begin{document}
\maketitle

\begin{abstract}
A Steiner $2$-design of block size $k$ is an ordered pair $(V, \mathcal{B})$ of finite sets such that
$\mathcal{B}$ is a family of $k$-subsets of $V$ in which each pair of elements of $V$ appears exactly once.
A Steiner $2$-design is said to be $r$-even-free if for every positive integer $i \leq r$ it contains no set of $i$ elements of $\mathcal{B}$
in which each element of $V$ appears exactly even times.
We study the even-freeness of a Steiner $2$-design when the cyclic group acts regularly on $V$.
We prove the existence of infinitely many nontrivial Steiner $2$-designs of large block size
which have the cyclic automorphisms and higher even-freeness than the trivial lower bound
but are not the points and lines of projective geometry.
\end{abstract}

\section{Introduction}
\noindent
A \textit{set system} is an ordered pair $(V, \mathcal{B})$ of finite sets, where $\mathcal{B}$ is a family of subsets of $V$.
The elements of $V$ and those of $\mathcal{B}$ are called \textit{points} and \textit{blocks} respectively.
A \textit{packing} of \textit{order} $v$ and \textit{block size} $k$ is a set system such that
the cardinality $\vert V \vert$ of the point set is $v$,
each block in $\mathcal{B}$ is a $k$-subset of $V$,
and every pair of points appear at most once in a block in $\mathcal{B}$.
A \textit{Steiner} $2$-\textit{design} is a special packing, where every pair of points appear exactly once in a block in $\mathcal{B}$.
In the language of combinatorial design theory, these two kinds of object are a $2$-$(v,k,1)$ packing and an $S(2,k,v)$ respectively.
To avoid triviality, we generally assume that $k \geq 3$ except when blocks of size two play an important role.

Two set systems $(V, \mathcal{B})$ and $(V', \mathcal{B}')$ are \textit{isomorphic} if there is a bijection
$\tau : V \rightarrow V'$ such that $\{\{\tau(b_i) \ \vert \ 0 \leq i \leq k-1\} \ \vert\ \{b_i \ \vert \ 0 \leq i \leq k-1\} \in \mathcal{B}\} = \mathcal{B}'$.
We use the term \textit{automorphism group} to mean a subgroup of the full automorphism group
formed by the set of all isomorphisms $V \rightarrow V$ that map $(V, \mathcal{B})$ to itself.

Automorphism groups of set systems have attracted attention in various research
from classical studies such as the classification of finite simple groups
to much younger fields in algebraic combinatorics.
Since the advent of digital revolution,
the study of automorphism groups of set systems has become relevant to computer science and engineering as well.

One particularly important case is when the cyclic group acts regularly on the points of a set system, namely a \textit{cyclic} set system.
Take a subset $B = \{b_0, \dots, b_{k-1}\}$ of integers and define $B + i {\pmod v}= \{b_0 + i {\pmod v},\dots, b_{k-1} + i {\pmod v}\}$.
By taking $V = \{0, 1, \dots, v-1\}$, a cyclic set system on $v$ points can be understood as the kind of set system $(V, \mathcal{B})$
in which for any $B \in \mathcal{B}$ we have $B + 1{\pmod v} \in \mathcal{B}$.
In what follows, we always assume that a cyclic set system on $v$ points is written this way with its point set $V$ being $\textit{\textbf{Z}}_v$ unless otherwise stated.

This simple description of a cyclic set system has allowed for many discoveries of cyclic set systems hidden in problems in disciplines
outside of the theories of finite groups and combinatorial designs.
Cyclic Steiner $2$-designs are particularly versatile in this regard. Their applications have been found
in many areas such as perpendicular magnetic recording in engineering (see Vasic, Kurtas, and Kuznetsov \cite{VKK}),
perfect secrecy authentication in cryptography (see Huber \cite{Huber}),
quasi-cyclic low-density parity-check (LDPC) codes in digital communications (see Lan \textsl{et al}.\ \cite{LTLMH}),
and quantum packet framing in quantum information science (see the author \cite{Fujiwara1}), to name a few.
A cyclic $2$-$(v,k,1)$ packing is also an interesting object, especially when it has the largest possible number of blocks for given $v$ and $k$;
such a packing is \textit{maximum}.
In fact, maximum cyclic packings are equivalent to optimal optical orthogonal codes (OOCs) of index one in coding theory (see Fuji-Hara and Miao \cite{FM}).
Since cyclic Steiner $2$-designs are automatically maximum packings, they are quintessential optimal OOCs as well.

While algebraic properties of set systems are of importance and interest from both the purely mathematical and more practical viewpoints,
some other properties have also gained equally great attention in various fields.
Avoidance of prescribed small set systems inside a Steiner $2$-design is an example of such properties.

A set system $(W, \mathcal{C})$ is called a \textit{configuration} of another set system $(V, \mathcal{B})$ if $W \subseteq V$ and $\mathcal{C} \subseteq \mathcal{B}$.
A configuration is \textit{even} if for any point $a \in W$ the number of blocks in $\mathcal{C}$ that contain $a$ is even.
A set system is $r$-\textit{even-free} if for any positive integer $i \leq r$ it contains no even configurations that consist of $i$ blocks.
Trivially an $r$-even-free set system is also an $(r-1)$-even-free for $r \geq 2$.
Because no pair of points appear more than once in a Steiner $2$-design,
it is straightforward to see that every $S(2,k,v)$ is $k$-even-free.
$S(2,k,v)$s with even-freeness higher than this trivial lower bound, especially the case when $k=3$ and $r=4$, have been investigated from various points of view
(see Colbourn and Rosa \cite{TRIPLESYSTEMS} and Colbourn and the author \cite{CF} for known results on $r$-even-free $S(2,3,v)$s in combinatorial design theory).

Research on avoidance problems typically draws on techniques in extremal set theory and combinatorial design theory.
The last two decades have seen rapid progress in the latter kind, partly due to the development of modern combinatorial design theory
(see, for example, Ling, Colbourn, Grannell, and Griggs \cite{ANTIPASCHJLMS} for a landmark advancement in techniques for constructing $4$-even-free $S(2,3,v)$s),
and partly due to the flood of discoveries of applications of $r$-even-free $S(2,k,v)$s with large $r$,
(for some recent examples of applications, see
the author and Colbourn \cite{FC} for data compaction for scan testing of very-large-scale integrated (VLSI) circuits,
Chee, Colbourn, and Ling \cite{RAIDERASURECOLBOURN} for efficient codes for redundant arrays of independent disks (RAID),
and the author \textsl{et al}.\ \cite{FCVBT} and the author and Tonchev \cite{FT}
for quantum error correction requiring only one Einstein-Podolsky-Rosen pair of maximally entangled quantum states).

Because of the applications of finite groups to combinatorial design theory,
it is becoming increasingly more intriguing to investigate automorphism groups of $S(2,k,v)$s in the context of avoidance problems.
In fact, recent important results in this area often exploit group actions to give constructions for combinatorial designs in avoidance problems
(see, for instance, Forbes, Grannell, and Griggs \cite{FGG1}).
The author \cite{sparseYF} investigated unavoidable configurations
when an $S(2,3,v)$ possesses automorphism groups that are frequently exploited
in the context of a long-standing avoidance problem, called Erd\H{o}s' $r$-sparse conjecture,
showing that many typical automorphism groups severely limit the range of avoidable configurations.

In light of the recent development of both purely mathematical and applicational aspects in this area,
it might be natural to study the relation of the important avoidance property, that is, even-freeness, to fundamental automorphism groups of combinatorial designs.
The primary purpose of the present paper is to investigate the existence problem of highly even-free packings
that admit abelian groups acting regularly on their points.
We are particularly interested in explicit constructions for cyclic $r$-even-free $S(2,k,v)$s with $r$ higher than $k$, the trivial lower bound.

In the following section we briefly review known relevant facts in combinatorial design theory and present some immediate results
on cyclic $r$-even-free designs. Section \ref{rconst} gives recursive constructions for cyclic $(k+1)$-even-free $S(2,k,v)$s
which work for a wide range of $k$.
Concluding remarks are given in Section \ref{conclusion}.

\section{Preliminaries}\label{prep}
\noindent
We first review basic facts in combinatorial design theory.
For proofs and more comprehensive treatments of combinatorial design theory,
the interested reader is referred to Beth, Jungnickel, and Lenz \cite{BJL} and Colbourn and Rosa \cite{TRIPLESYSTEMS}.

As defined in the previous section,
a Steiner $2$-design of order $v$ and block size $k$, briefly $S(2,k,v)$,
is an ordered pair $(V, \mathcal{B})$ of finite sets such that every pair of points in $V$ appear exactly once in a block in $\mathcal{B}$.
When $v \leq k$, an $S(2,k,v)$ is \textit{trivial}.
A simple counting argument shows that an $S(2,k,v)$ exists only if $v-1 \equiv 0 \pmod{k-1}$ and $v(v-1) \equiv 0 \pmod{k(k-1)}$.
It is known that these necessary conditions are asymptotically sufficient in the sense that
there exists a constant $v_k$ that depends only on $k$
such that for every $v > v_k$ satisfying the necessary conditions there exists an $S(2,k,v)$ (see Wilson \cite{W1,W2,W3}).

The necessary conditions for the existence become simpler if we assume that the cyclic group of order $v$ acts regularly on the points of an $S(2,k,v)$,
that is, when the Steiner $2$-design is cyclic. In this case, it is not difficult to see that a cyclic $S(2,k,v)$ exits only if $v \equiv 1,\ k \pmod{k(k-1)}$.

Proving nontrivial sufficient conditions seems challenging.
The existence problem of a cyclic $S(2,3,v)$ is famously known as Heffter's difference problem,
which was posed by Heffter \cite{Heffter,Heffter2} in 1896 and 1897.
The problem was solved in the affirmative during the last century with the definite exception of $v = 9$ (see Colbourn and Rosa \cite{TRIPLESYSTEMS}).
For block size $k \geq 4$, however, very little is known about the existence of cyclic Steiner $2$-designs despite its long history;
indeed, there are not many known systematic constructions (see, for example, Chen and Wei \cite{CW} for the case $k = 4$).

The knowledge of the existence of an $r$-even-free $S(2,k,v)$ found in the literature is also much more complete for the case when $k=3$
while the cases of larger block sizes remain nearly completely open.
The \textit{Pasch} is the unique possible even configuration on four blocks in an $S(2,3,v)$:
$\{\{a,b,c\}, \{a,d,e\}, \{b,d,f\}, \{c,e,f\}\}$.
The \textit{generalized Pasch} of block size $k$ is the smallest even configuration that may exist in an $S(2,k,v)$.
By considering the fact that no pair of points occur twice in a block in a Steiner $2$-design,
it is easy to see that for each block size $k$ the generalized Pasch consists of $k+1$ blocks and is unique up to isomorphism.
Hence, an $S(2,k,v)$ is $(k+1)$-even-free if and only if it contains no Pasch configurations for $k=3$
or generalized Pasch configurations for other values of $k$.
Note that if the block size is odd, a $2i$-even-free Steiner $2$-design is automatically $(2i+1)$-even-free
because there are no even configurations on an odd number of blocks of odd block size.
Hence, an $S(2,3,v)$ avoiding Pasch configurations, for example, is not only $4$-even-free, but also $5$-even-free.
While the number of nonisomorphic $S(2,k,v)$s grows exponentially as $v$ becomes larger,
those avoiding Pasch or generalized Pasch configurations appear to be rare.
In fact, an exhaustive computational result by Kaski and \"{O}sterg\r{a}rd \cite{KO} showed that,
of all $11, 084, 874, 829$ $S(2,3,19)$s, only $2, 591$ are $5$-even-free.
It is notable that only two of them are both cyclic and $5$-even-free.

Extensive investigations have been carried out for the case when $k=3$.
The complete spectrum of those $v$ for which there exists a $5$-even-free $S(2,3,v)$ is now known:
\begin{theorem}[(Grannell, Griggs, and Whitehead \cite{GGW})]\label{anti-Pasch}
There exists a $5$-even-free $S(2,3,v)$ if and only if $v \equiv 1, 3 \pmod{6}$ except $v= 7, 13$.
\end{theorem}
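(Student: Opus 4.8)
The plan is to prove the two directions separately. For necessity, the congruence $v \equiv 1, 3 \pmod{6}$ is nothing but the standard admissibility condition for an $S(2,3,v)$ to exist at all, obtained from the divisibility constraints noted earlier. What remains is to exclude the two admissible orders $v = 7$ and $v = 13$. For $v = 7$ the only Steiner triple system is the Fano plane, and a direct inspection exhibits a Pasch configuration in it, so it is not $4$-even-free and hence, the block size being odd, not $5$-even-free. For $v = 13$ there are exactly two nonisomorphic systems, and I would verify by a short exhaustive check of their triples that each contains a Pasch. These are finite verifications on explicitly listed designs and pose no real difficulty.

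The real content is sufficiency. Using the remark that for odd block size $2i$-even-freeness implies $(2i+1)$-even-freeness, a $5$-even-free $S(2,3,v)$ is the same as a $4$-even-free one, that is, an $S(2,3,v)$ containing no Pasch configuration (the unique even configuration on four triples). So for every admissible $v$ other than $7$ and $13$ I must construct a Pasch-free, or \emph{anti-Pasch}, $S(2,3,v)$. I would follow the standard paradigm for spectrum results: a finite stock of \emph{base} designs together with \emph{recursive} constructions that provably preserve the anti-Pasch property, combined by induction on $v$. First I would secure anti-Pasch systems for a suitable set of small admissible orders, such as $v = 1, 3, 9, 15, 19, 21, 25, 27$; some, like the unique $S(2,3,9)$, are anti-Pasch by inspection, while others arise from cyclic base blocks or from a Bose- or Skolem-type construction on an idempotent commutative quasigroup, with a Pasch check in each case.

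The engine is a tripling recursion $v \mapsto 3v$, together with variants yielding $3v + c$ for small constants $c$, realized through weighted group divisible designs: one inflates the points of a master anti-Pasch design, installs an anti-Pasch ingredient on each group, and fills the crossing pairs using anti-Pasch transversal structures, that is, Latin squares free of the quasigroup analogue of a Pasch quadruple. The point to certify is that any putative Pasch configuration in the composite must fall into one of finitely many types --- contained in a single inflated group, contained in one transversal design, or straddling several ingredients --- and that each type forces a forbidden subconfiguration in an ingredient; choosing all ingredients anti-Pasch then rules out every type. Running these recursions from the base designs covers every admissible order above a computable bound and in every relevant residue class.

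The hard part, as in all spectrum theorems of this shape, is the endgame. The recursions only deliver orders past a threshold and only where compatible ingredients exist, so a finite but genuinely delicate list of small and ``awkward'' orders --- especially those just above the exceptions $7$ and $13$ --- must be dispatched by ad hoc direct constructions or by computer search. Equally demanding is the bookkeeping that guarantees \emph{no} Pasch is created at any gluing step, which is exactly where the ingredient designs must be selected with the correct avoidance properties and where most of the technical effort lies. Once the base cases are confirmed anti-Pasch and each recursion is shown to preserve anti-Paschness, the induction closes and the spectrum is complete.
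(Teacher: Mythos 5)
This theorem is not proved in the paper at all: it is imported verbatim from Grannell, Griggs, and Whitehead \cite{GGW}, with the underlying machinery coming from Ling, Colbourn, Grannell, and Griggs \cite{ANTIPASCHJLMS}, so there is no in-paper proof to compare yours against. Your outline does reproduce the architecture of the published resolution of the anti-Pasch conjecture: the parity remark reducing $5$-even-freeness to Pasch-avoidance (which the paper itself states in Section \ref{prep}), the finite exclusion of $v = 7, 13$, Bose/Skolem-type and cyclic base designs, and recursive inflation constructions whose ingredients are chosen to preserve anti-Paschness, followed by a finite endgame.

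The genuine gap is that every load-bearing step is asserted rather than established, and those steps are precisely where the difficulty of this theorem lives. The claim that ``each type forces a forbidden subconfiguration in an ingredient'' is not automatic: a Pasch configuration straddling several inflated blocks or groups does not in general project to an even configuration in the master design or in a filling structure, so one must prove, for each specific construction, a case analysis showing either that the projection collapses into a single ingredient (which must then avoid intercalate-type subsquares or related quasigroup configurations) or that it yields a Pasch in the master; exhibiting Latin squares and transversal structures with these extra avoidance properties, and making the recursions mesh with the residue classes modulo $18$ that actually arise, consumed two substantial papers in the literature, with several classes resisting the generic recursion and requiring bespoke direct constructions. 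Likewise your base-case list and the small-order endgame are deferred to unspecified checks. As written, this is a faithful roadmap of how the theorem was proved, but it is a research plan rather than a proof: none of the preservation lemmas, ingredient existence results, or exceptional-order constructions that constitute the mathematical content are carried out.
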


Finite geometries are the sources of the known $S(2,k,v)$s of larger $k$ with high even-freeness.
\begin{theorem}[(The author \textsl{et al}.\ \cite{FCVBT})]\label{PG}
For any integer $m \geq 3$ and any odd prime power $q$,
the points and lines of projective geometry $\textup{PG}(m,q)$ form a $(2q+1)$-even-free $S(2,q+1,\frac{q^{m+1}-1}{q-1})$.
\end{theorem}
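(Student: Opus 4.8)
The plan is to recast even-freeness as a minimum-distance statement over $\mathbb{F}_2$ and then feed in the incidence geometry of $\textup{PG}(m,q)$. Identify each line $\ell$ with its characteristic vector $\mathbf{1}_\ell \in \mathbb{F}_2^{V}$ and let $A$ be the point-by-line incidence matrix over $\mathbb{F}_2$, so that a set $S$ of lines is an even configuration precisely when $\sum_{\ell \in S}\mathbf{1}_\ell = 0$, i.e.\ when the indicator vector $u \in \mathbb{F}_2^{\textup{lines}}$ of $S$ lies in $\ker A$. Thus $(2q+1)$-even-freeness is equivalent to the claim that every nonzero vector of $\ker A$ has weight at least $2q+2$, and the whole theorem becomes a lower bound on the minimum weight of this binary code.

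First I would settle the planar case $\textup{PG}(2,q)$. Writing $n = q^2+q+1$, over $\mathbb{F}_2$ one has $AA^{\mathsf T} = qI + J \equiv I + J$ because $q$ is odd, and a direct computation gives $\textup{rank}_2(I+J) = n-1$ (its kernel is $\{0,\mathbf{1}\}$, using that $n$ is odd). Combined with $A\mathbf{1} = (q+1)\mathbf{1} \equiv 0$, this forces $\textup{rank}_2 A = n-1$, so $\ker A$ is one-dimensional. Since $\mathbf{1}$ itself lies in $\ker A$ (every point sits on $q+1$, hence an even number, of lines), the only nonempty even configuration inside a plane is the full set of all $n$ lines, of size $n \ge 2q+2$.

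To descend from $\textup{PG}(m,q)$ to a plane, let $S$ be a nonempty even configuration and choose $\ell_0 \in S$; some point $P \in \ell_0$ has even degree $\ge 2$, so a second line $\ell_1 \in S$ passes through $P$, and I set $\pi = \langle \ell_0,\ell_1\rangle$. Let $S_\pi$ be the lines of $S$ lying in $\pi$, and let $D$ be the points of $\pi$ having odd degree in $S_\pi$. For $Q \in D$ the number of lines of $S$ through $Q$ that leave $\pi$ is odd, hence at least one; as each such line meets $\pi$ only in $Q$, these external lines are distinct for distinct $Q \in D$, whence $|S| \ge |S_\pi| + |D|$. If $S_\pi$ is all of $\pi$ we are done by the planar lemma; otherwise the theorem reduces to the purely planar inequality $|S_\pi| + |D| \ge 2q+2$, which moreover is only needed when $|S_\pi| \le 2q+1$. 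The bound is sharp: a hyperbolic quadric inside a solid $\textup{PG}(3,q)\subseteq \textup{PG}(m,q)$ carries two reguli of $q+1$ lines, and their union is an even configuration of exactly $2q+2$ lines, since each point of the quadric lies on precisely one line of each regulus and every other point on none.

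The main obstacle is the planar inequality itself. For $|S_\pi| = 2$ it is an equality ($|D| = 2q$), but for $3 \le |S_\pi| \le 2q+1$ a naive double count stalls at the weaker estimate $|S| \ge q+2$, the generalized-Pasch threshold, and the extra factor of two must come from genuine plane geometry. I expect to extract it from the hypothesis that $q$ is odd: a hypothetical small even configuration forces, in its $2$-regular extremal shape, a family of $q+2$ pairwise-meeting non-concurrent lines, equivalently a dual $(q+2)$-arc, which cannot exist because an arc of $\textup{PG}(2,q)$ has at most $q+1$ points when $q$ is odd. Turning this dichotomy into a clean bound valid for \emph{all} degree distributions, not merely the $2$-regular case, is the crux of the argument.
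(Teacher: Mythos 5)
Your proposal is not a proof; it is a reduction of the theorem to an unproven planar lemma, and you say as much yourself. To be clear about what does work: the paper under review never proves this statement (it is quoted from \cite{FCVBT}), so there is nothing to compare against, and I am judging your argument on its own terms. The $\mathbb{F}_2$ reformulation is correct; the planar kernel computation is correct (rank$_2(AA^{\mathsf T}) = \mathrm{rank}_2(I+J) = n-1$ together with $\mathbf{1} \in \ker A$ does force $\ker A = \{0,\mathbf{1}\}$ for $q$ odd); the descent inequality $|S| \geq |S_\pi| + |D|$ is sound, since a line not contained in $\pi$ meets $\pi$ in at most one point and the parity of the external degree at a point of $\pi$ equals the parity of its $S_\pi$-degree; and the hyperbolic-quadric example correctly shows the bound $2q+1$ cannot be improved.

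The gap is the planar inequality $|S_\pi| + |D| \geq 2q+2$ for $3 \leq |S_\pi| \leq 2q+1$, which you explicitly leave open, and it is not a residual detail: it carries essentially the entire content of the theorem. As you note, the straightforward count (each line of $S_\pi$ carries at least $q+2-|S_\pi|$ odd points) only recovers $|S| \geq q+2$, which is nothing more than the trivial $(q+1)$-even-freeness that every $S(2,q+1,v)$ enjoys. Your proposed fix via dual arcs addresses only the regime where $S_\pi$ is small, $2$-regular, and non-concurrent, but the inequality is tight in several structurally unrelated regimes: $(|S_\pi|,|D|) = (2,2q)$ (two lines), $(q+1,q+1)$ (a dual conic, whose odd points form a conic), and $(2q,2)$ (the $2q$ lines through two points $P,P'$ other than the line $PP'$, whose odd set is exactly $\{P,P'\}$). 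Any proof must interpolate between all of these, handling points of arbitrary odd degree and arbitrary concurrences, so no single extremal-shape argument of the kind you sketch will close it. Note also that the lemma is not a consequence of the theorem you are trying to prove: a planar set with small odd boundary cannot in general be completed to a small even configuration in $\textup{PG}(m,q)$ by attaching external lines, since those lines create new odd points off $\pi$. So you have reduced the statement to a planar assertion that is, if anything, stronger, and the reduction by itself represents no progress past the trivial bound.
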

\begin{theorem}[(Frumkin and Yakir \cite{FY})]\label{PG2}
For any odd prime power $q$
there exists a $(q^2+q)$-even-free $S(2,q+1,q^2+q+1)$, which is a projective plane.
\end{theorem}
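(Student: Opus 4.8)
The plan is to translate the even-freeness condition into a statement about the binary null space of the point-line incidence matrix and then to pin down that null space exactly. Write $N = q^2+q+1$ and let $A$ be the $N \times N$ incidence matrix of $\mathrm{PG}(2,q)$ over $\mathbb{F}_2$, with rows indexed by points and columns by lines, so that $A_{p,\ell}=1$ precisely when the point $p$ lies on the line $\ell$. A set $\mathcal{C}$ of lines, encoded by its characteristic vector $x \in \mathbb{F}_2^N$, is an even configuration exactly when every point meets an even number of its lines, that is, when $Ax = 0$. Thus the even configurations correspond precisely to the nonzero vectors of $\ker_{\mathbb{F}_2} A$, and the design is $(q^2+q)$-even-free if and only if every such vector has Hamming weight at least $q^2+q+1$.

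The heart of the argument is to compute $\dim \ker_{\mathbb{F}_2} A$, and I would do this through the standard symmetric-design identity $AA^{\mathsf{T}} = qI + J$, where $J$ is the all-ones matrix. Since $q$ is odd this reduces modulo $2$ to $AA^{\mathsf{T}} = I + J$. A short direct computation shows that over $\mathbb{F}_2$ the matrix $I+J$ has null space spanned by the all-ones vector $\mathbf{1}$: here $N$ is odd, so $(I+J)\mathbf{1} = \mathbf{1} + \mathbf{1} = 0$, while any kernel vector $x$ must satisfy $x = (\mathbf{1}^{\mathsf{T}}x)\mathbf{1}$ and hence be a scalar multiple of $\mathbf{1}$; therefore $\operatorname{rank}_{\mathbb{F}_2}(I+J) = N-1$. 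From $\operatorname{rank} A \ge \operatorname{rank}(AA^{\mathsf{T}})$ we get $\operatorname{rank}_{\mathbb{F}_2} A \ge N-1$. On the other hand, each point of $\mathrm{PG}(2,q)$ lies on exactly $q+1$ lines, and $q+1$ is even, so $A\mathbf{1} = 0$; thus $A$ is singular and $\operatorname{rank}_{\mathbb{F}_2} A \le N-1$. Combining the two bounds gives $\operatorname{rank}_{\mathbb{F}_2} A = N-1$ and therefore $\dim \ker_{\mathbb{F}_2} A = 1$.

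Since $\mathbf{1}$ already lies in this one-dimensional kernel, I conclude $\ker_{\mathbb{F}_2} A = \{\mathbf{0}, \mathbf{1}\}$. The unique nonzero even configuration is therefore the set of all $q^2+q+1$ lines of the plane, whose weight $q^2+q+1$ exceeds $q^2+q$. Hence no even configuration consists of $i \le q^2+q$ blocks, which is exactly the assertion that this $S(2,q+1,q^2+q+1)$ is $(q^2+q)$-even-free. I note that the bound is sharp, since the full set of lines is genuinely an even configuration, so the plane is not $(q^2+q+1)$-even-free.

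The step I expect to be the crux is the rank computation, and in particular the clean reduction $AA^{\mathsf{T}} = I+J$ over $\mathbb{F}_2$. This is precisely where the hypothesis that $q$ is odd does all of the work, and it does so twice: it forces $q \equiv 1 \pmod{2}$ so that the diagonal term survives as the identity, and it forces $q+1$ to be even so that $\mathbf{1}$ lies in the kernel and caps the rank at $N-1$. Everything else is bookkeeping. If $q$ were even the identity would collapse to $AA^{\mathsf{T}} = J$ and the entire picture, including the dimension of the kernel and hence the even-freeness, would change.
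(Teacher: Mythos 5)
Your proof is correct, and it is worth noting that the paper itself offers no proof of this statement at all: Theorem~\ref{PG2} is quoted as a known result of Frumkin and Yakir \cite{FY}, whose original argument goes through the computation of ranks of inclusion matrices via modular representation theory. Your route is the classical, self-contained linear-algebra argument for symmetric designs: encode even configurations as vectors of $\ker_{\mathbb{F}_2}A$, use $AA^{\mathsf{T}} = qI+J$ to force $\operatorname{rank}_{\mathbb{F}_2}A \ge N-1$ when $q$ is odd, and use $A\mathbf{1}=0$ (each point on $q+1$ lines, $q+1$ even) to cap the rank at $N-1$, so that $\ker_{\mathbb{F}_2}A=\{\mathbf{0},\mathbf{1}\}$ and the unique nonzero even configuration is the full set of $q^2+q+1$ lines. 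Every step checks out, including the translation of $r$-even-freeness into a minimum-weight condition on the kernel and the consistency check that $x=(\mathbf{1}^{\mathsf{T}}x)\mathbf{1}$ admits only $\mathbf{0}$ and $\mathbf{1}$ as solutions when $N$ is odd. Two small observations on what your approach buys: first, your argument never uses the field structure of $\mathrm{PG}(2,q)$, only the axioms of a symmetric design, so it proves the stronger statement that \emph{every} projective plane of odd order $q$ (Desarguesian or not) is $(q^2+q)$-even-free; second, your closing remark that the full line set is itself an even configuration shows the even-freeness is exactly $q^2+q$, i.e., the theorem is sharp, which is consistent with the paper's use of projective planes as the extremal case escaping the bound of Proposition~\ref{transitive}.
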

\begin{theorem}[(M\"{u}ller and Jimbo \cite{MJ})]\label{AG}
For any integer $m \geq 2$ and any odd prime power $q$,
the points and lines of affine geometry $\textup{AG}(m,q)$ form a $(2q-1)$-even-free $S(2,q,q^m)$.
\end{theorem}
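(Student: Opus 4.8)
The plan is to recast $(2q-1)$-even-freeness as a minimum-weight problem and then to exploit the parallel-class structure of $\mathrm{AG}(m,q)$. Working over $\mathbb{F}_2$, I would identify a configuration with its characteristic vector on the set of lines; a configuration is even exactly when it lies in the kernel of the point-line incidence map $N\colon \mathbb{F}_2^{\,\mathrm{lines}} \to \mathbb{F}_2^{\,\mathrm{points}}$, so the assertion is equivalent to saying that every nonzero vector of $\ker N$ has weight at least $2q$. I would first record the cheap parity consequence of $q$ being odd: if $S$ is even then each point meets an even number of blocks, so the flag count $q\lvert S\rvert$ is even, whence $\lvert S\rvert$ is even; consequently it suffices to exclude even configurations with $2\le i\le 2q-2$ blocks. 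Next I would introduce directions: every line of $\mathrm{AG}(m,q)$ has a direction in $\mathrm{PG}(m-1,q)$, and the lines of a given direction $d$ form a parallel class partitioning the points. Writing $S_d$ for the lines of $S$ in direction $d$ and $P_d=\bigcup S_d$, each point lies on exactly one line of each parallel class, so evenness becomes the single relation $\sum_d \mathbf{1}_{P_d}=0$ in $\mathbb{F}_2^{\,\mathrm{points}}$, where each $P_d$ is a union of parallel $d$-lines, hence invariant under translation by $d$.

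With this in hand the core estimate is clean. A single direction cannot support an even configuration, since parallel lines are disjoint and cover their points once; hence at least two directions are active. Fix an active direction $d_0$ and set $a_0=\lvert S_{d_0}\rvert$. Every point of $P_{d_0}$ is covered once by $S_{d_0}$, so it must be covered an odd, hence positive, number of times by the remaining lines, while points off $P_{d_0}$ are covered evenly; that is, the lines of $S\setminus S_{d_0}$ have coverage vector $\mathbf{1}_{P_{d_0}}$. Since a line of direction $\ne d_0$ meets each of the $a_0$ parallel lines of $S_{d_0}$ in at most one point, it meets $P_{d_0}$ in at most $a_0$ points; counting the at least $a_0q$ odd incidences on $P_{d_0}$ gives $\lvert S\setminus S_{d_0}\rvert\ge q$, and therefore $\lvert S\rvert\ge q+a_0$. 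When exactly two directions are active this applies symmetrically to both and yields $a_0\ge q$ and $a_1\ge q$, so $\lvert S\rvert\ge 2q$; the two full parallel classes inside a single $2$-flat (a $q\times q$ grid covering $q^2$ points twice each) show that $2q$ is attained.

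To finish I would reduce the general situation to this two-direction core. The natural device is to project along an active direction $d_0$, collapsing each $d_0$-line to a point of $\mathrm{AG}(m-1,q)$. Here $q$ odd is essential: summing $\mathbb{F}_2$-values over each collapsed line preserves parity because $q\equiv 1\pmod 2$, so the relation $\sum_{L\in S\setminus S_{d_0}}\mathbf{1}_L=\mathbf{1}_{P_{d_0}}$ pushes forward to a line configuration in $\mathrm{AG}(m-1,q)$ whose coverage is the indicator of the $a_0$ images of the chosen $d_0$-lines. This suggests an induction on $m$ through a strengthened statement bounding the number of lines needed to cover a prescribed point set with odd parity, with the planar two-direction estimate as the base case; equivalently one shows that a minimum even configuration already uses only two directions, so is forced to be a grid by the rigidity that a set invariant under two independent translations is a union of $2$-flats and hence meets each of the two parallel classes in at least $q$ lines.

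The main obstacle is precisely the passage from two active directions to many. The elementary incidence count above degrades badly when $r$ directions are active: summing $\lvert S\rvert\ge q+\lvert S_d\rvert$ over all active $d$ only gives $\lvert S\rvert\ge \tfrac{r}{r-1}q$, which is weaker than $2q$ once $r\ge 3$. Thus the factor $2$ cannot be extracted from counting alone; it must come either from the inductive parity-covering bound in $\mathrm{AG}(m-1,q)$ or from the geometric rigidity that simultaneous periodicity in several directions imposes, and making this uniform in both $m$ and the odd prime power $q$ is the crux of the argument.
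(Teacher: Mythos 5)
First, a point of calibration: the paper does not prove this statement at all --- it is quoted as a known result of M\"{u}ller and Jimbo \cite{MJ} --- so your proposal has to stand on its own, and it does not yet. Everything you set up is correct as far as it goes: identifying even configurations with kernel vectors of the $\mathbb{F}_2$ incidence map; the parity observation that $q$ odd forces $|S|$ even; the decomposition by directions, under which the coverage of $S_d$ is exactly $\mathbf{1}_{P_d}$ because parallel lines are disjoint; the incidence count giving $|S \setminus S_{d_0}| \geq q$, hence $|S| \geq q + a_0$; the conclusion $|S| \geq 2q$ when exactly two directions are active; and sharpness via the grid. But, as you yourself flag, the case of $r \geq 3$ active directions is not handled, and that case is the entire content of the theorem beyond easy counting: your argument as written only establishes that every even configuration has at least $\min\{2q, \lceil 3q/2 \rceil\}$ blocks, i.e.\ roughly $(3q/2)$-even-freeness rather than $(2q-1)$-even-freeness. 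This is a genuine gap, not a deferred technicality.

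Moreover, neither of the two repairs you sketch survives scrutiny in its stated form. The rigidity argument requires a point set invariant under two independent translations; such a set exists only when exactly two directions are active, since then $\mathbf{1}_{P_{d_0}} = \mathbf{1}_{P_{d_1}}$ and the common set is $\langle d_0, d_1 \rangle$-periodic. For $r \geq 3$, each $P_{d_i}$ is only $d_i$-periodic, and the relation $\sum_i \mathbf{1}_{P_{d_i}} = 0$ imposes no containment among the $P_{d_i}$, so no set in sight carries two periods. The projection induction, in turn, needs a quantitative lemma of the form ``a set of lines of $\textup{AG}(m-1,q)$ whose oddly covered point set $T$ has size $t \geq 1$ must contain at least $2q - t$ lines,'' and that lemma is false: a single line covers exactly its own $q$ points oddly, so $t = q$ is achieved with one line instead of $q$. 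One can restrict to $t \leq q-2$ (which is all the induction needs, since $a_0 \leq |S| - q \leq q - 2$ for a putative counterexample), but then the restricted lemma is essentially as strong as the theorem itself and is nowhere proved; in addition, distinct lines upstairs can project to the same line and cancel modulo $2$, so the downstairs configuration does not retain the structure of $S \setminus S_{d_0}$, which blocks any naive inductive classification of minimum even configurations as grids. Until the $r \geq 3$ case is closed --- which is precisely where M\"{u}ller and Jimbo's actual work lies --- the proposal proves a strictly weaker bound than the statement requires.
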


The following is the sharpest known upper bound on the even-freeness of an $S(2,3,v)$:
\begin{theorem}[(The author and Colbourn \cite{FC})]\label{upperbound}
There exists no nontrivial $8$-even-free $S(2,3,v)$.
\end{theorem}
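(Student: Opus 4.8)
The plan is to recast even-freeness in linear-algebraic terms and then defeat it by a pigeonhole count, so I would argue by contradiction: suppose a nontrivial $8$-even-free $S(2,3,v)$ exists. Identify each block $B \in \mathcal{B}$ with its characteristic vector $\chi_B \in \mathbb{F}_2^{V}$, so that a configuration on blocks $B_1,\dots,B_i$ is even precisely when $\chi_{B_1}+\dots+\chi_{B_i}=0$ over $\mathbb{F}_2$; that is, even configurations are exactly the nonzero $\mathbb{F}_2$-linear dependencies among the $\chi_B$. Since each block carries three points, an even configuration on $i$ blocks accounts for $3i$ incidences, forcing $i$ to be even, and no block repeats, so $8$-even-freeness says there is no even configuration on four, six, or eight blocks. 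The reformulation I would exploit is: if $S_1\neq S_2$ are block sets with $|S_1|,|S_2|\le 4$ and $\sum_{B\in S_1}\chi_B=\sum_{B\in S_2}\chi_B$, then $S_1\triangle S_2$ is a nonempty even configuration of even cardinality at most eight, hence on four, six, or eight blocks. Thus $8$-even-freeness forces the map sending a set of at most four blocks to the sum of its characteristic vectors to be injective.

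Next I would manufacture many four-block sets whose images are extremely sparse. Fix a point $p$ and two blocks $\{p,a,b\},\{p,c,d\}$ through it; let $B_3$ be the block on $\{a,c\}$ and $B_4$ the block on $\{b,d\}$. A short check shows $B_3,B_4$ are distinct from each other and from the first two blocks, and that $\chi_{\{p,a,b\}}+\chi_{\{p,c,d\}}+\chi_{B_3}+\chi_{B_4}=\chi_{\{\alpha,\beta\}}$, where $\alpha,\beta$ are the third points of $B_3,B_4$, because $p,a,b,c,d$ each occur twice and cancel. If $\alpha=\beta$ the sum is $0$, giving a four-block even configuration (a Pasch) and contradicting the hypothesis directly; otherwise the image is the weight-two vector $\chi_{\{\alpha,\beta\}}$.

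I would then count. There are $v\binom{r}{2}$ pairs $(p,\{B_1,B_2\})$ with $r=(v-1)/2$, and each admits two cross-pairings, so the construction produces $2v\binom{r}{2}$ four-block sets. For a fixed $p$ such a set recovers $\{B_1,B_2\}$ as its two blocks through $p$; since a four-block set has at most $\binom{4}{2}=6$ intersecting pairs, each meeting in a single point, and the cross-pairing is then forced, any one set is produced at most six times. Hence at least $\tfrac{1}{3}v\binom{r}{2}=\Omega(v^3)$ distinct four-block sets arise, each mapping to a weight-two vector. But $\mathbb{F}_2^{V}$ holds only $\binom{v}{2}=O(v^2)$ weight-two vectors, so injectivity fails for large $v$; explicitly $\tfrac{1}{3}v\binom{r}{2}>\binom{v}{2}$ already for every $v\ge 16$. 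The remaining admissible orders $v\in\{7,9,13,15\}$ are finite and handled by hand: $v=7,13$ admit no $5$-even-free system by Theorem \ref{anti-Pasch} and so contain a Pasch, the unique system of order $9$ contains two parallel classes forming a six-block even configuration, and order $15$ falls to the known classification of its Steiner triple systems.

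The main obstacle is not the asymptotic heart of the argument but the surrounding bookkeeping: verifying rigorously that the four constructed blocks are always distinct and that the multiplicity bound of six holds uniformly once the degenerate coincidences among $a,b,c,d,\alpha,\beta$ are accounted for, and disposing cleanly of the small orders $v\in\{7,9,13,15\}$ rather than merely deferring to computation. Sharpening the count—by also tracking the weight-zero images and by extracting further sparse sums—so as to compress or eliminate this exceptional list is where I expect the most care to be required.
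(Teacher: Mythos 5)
First, a structural note: the paper does not prove this theorem at all — it is quoted from \cite{FC} — so there is no internal proof to compare against, and I am judging your argument on its own merits. Your core mechanism is sound: $8$-even-freeness does make the map $S \mapsto \sum_{B \in S}\chi_B$ injective on sets of at most four blocks (the symmetric difference of two such sets with equal sums would be a nonempty even configuration on at most eight blocks); the cross-pair construction does produce four pairwise distinct blocks summing to $\chi_{\{\alpha,\beta\}}$, or to $0$, which is a Pasch and an immediate contradiction; your multiplicity claim ``the cross-pairing is then forced'' is correct (if both pairings of one intersecting pair regenerated the set, two blocks of the set would share two points); and the resulting inequality correctly eliminates every admissible order $v \geq 19$. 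Your treatment of $v = 7, 13$ via Theorem \ref{anti-Pasch} and of $v = 9$ via two parallel classes of the unique system is also correct.

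The genuine gap is $v = 15$. There your inequality reads $\tfrac{1}{3}\cdot 15\cdot\binom{7}{2} = 105 \leq 105 = \binom{15}{2}$ — equality, not a contradiction — and your disposal, ``falls to the known classification of its Steiner triple systems,'' is not a proof: the classification of the $80$ systems of order $15$ says nothing by itself about their even configurations, and you neither perform nor cite a verification that each of them contains an even configuration on at most eight blocks. Fortunately the gap closes inside your own framework by tightening the multiplicity bound you were too generous with. In the non-degenerate case the two new blocks $B_3 = \{a,c,\alpha\}$ and $B_4 = \{b,d,\beta\}$ are disjoint, since any common point would force $\alpha = \beta$; hence the constructed four-set has at most \emph{five} intersecting pairs, not six, and multiplicity at most $5$ turns the count into $\tfrac{2}{5}v\binom{r}{2} \leq \binom{v}{2}$, i.e.\ $v \leq 13$, after which your existing arguments for $7$, $9$, $13$ finish the proof. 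In fact, checking all five intersecting pairs shows that only the original pair $\{B_1,B_2\}$ with the original pairing regenerates the set (any other seed forces $\alpha$ or $\beta$ into $\{p,a,b,c,d\}$, or forces two blocks to share two points), so the multiplicity is exactly $1$; then $2v\binom{r}{2} \leq \binom{v}{2}$ forces $v \leq 5$, and the theorem follows for every nontrivial order with no small-case analysis whatsoever.
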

It is conjectured that there are no nontrivial $6$-even-free $S(2,3,v)$s (see Colbourn and the author \cite{CF}).

While the conjecture is made through purely combinatorial observations,
it is not difficult to see that it holds if we limit ourselves to cyclic $S(2,3,v)$s. In fact, the same can be proved
for $S(2,k,v)$s with point-transitive, abelian automorphisms:
\begin{proposition}\label{transitive}
If an abelian automorphism group acts transitively on the point set of a nontrivial $r$-even-free $S(2,k,v)$ that is not a projective plane,
then $r \leq 2k-1$.
\end{proposition}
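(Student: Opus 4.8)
The plan is to exploit the fact that a transitive abelian permutation group is automatically regular, so we may identify the point set $V$ with the group $G$ acting on itself by translation, write every block as a subset of $G$, and regard the whole design as a union of block orbits $\{B+g : g\in G\}$. The quantity to bound is the minimum size of an \emph{even configuration}, i.e.\ a set of blocks whose characteristic vectors sum to $0$ over $\mathbb{F}_2$; since $r$-even-freeness fails exactly when such a configuration of size at most $r$ exists, proving $r\le 2k-1$ amounts to exhibiting, in every nontrivial non-projective-plane design with this symmetry, an even configuration on at most $2k$ blocks. I would phrase the search inside the group algebra $\mathbb{F}_2[G]$: writing $\beta_B=\sum_{b\in B}x^{b}$, selecting the translates $\{B+g : g\in S\}$ of one full orbit produces the point-sum $\alpha\beta_B$ with $\alpha=\sum_{g\in S}x^{g}$, so an even configuration drawn from that orbit is precisely a low-weight element of the annihilator of $\beta_B$, and configurations spanning several orbits are sums of such.

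First I would record the structural consequence of excluding projective planes. By Fisher's inequality a nontrivial $S(2,k,v)$ satisfies $v\ge k^2-k+1$, with equality only for projective planes, so ``not a projective plane'' is equivalent to $v>k^2-k+1$, i.e.\ to the replication number obeying $\rho\ge k+1$ and hence $v\ge k^2$. The gain from this inequality, combined with regularity, is that for any block $B$ the translates meeting $B$ are indexed by the at most $k(k-1)+1=k^2-k+1$ elements of $B-B$, so some translate $B+t$ is disjoint from $B$. Thus the excluded projective-plane case is exactly the case in which no two blocks are disjoint, and this is the only obstruction to the construction below.

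The heart of the argument is to produce an even configuration of size at most $2k$. The extremal target is the $k\times k$ \emph{grid}: two families of $k$ pairwise disjoint blocks that cross-intersect, so that the $k^2$ crossing points each lie in exactly one block of each family and hence have degree $2$. Such a grid is an even configuration on exactly $2k$ blocks, and it is realized by the two reguli of a hyperbolic quadric in $\mathrm{PG}(3,q)\subseteq\mathrm{PG}(m,q)$; since those geometries are cyclic via a Singer group and are $(2q+1)=(2k-1)$-even-free by Theorem~\ref{PG}, the bound $2k-1$ is sharp and this grid is the configuration forcing it. My plan is to assemble such a grid from the disjoint pair $B,B+t$ by letting the regular action generate the two transversal families, using the translation structure to guarantee that the cross-intersections account for all points with even multiplicity. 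As a warm-up that already exhibits the mechanism, the symmetric difference of the pencil of blocks through $0$ with its translate (the pencil through a second point $w$) yields an even configuration on $2\rho-2$ blocks whenever $\rho$ is odd, the residual odd degrees occurring only at $0$ and $w$; this is too large in general but shows how the translation symmetry manufactures even configurations.

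The main obstacle I anticipate is making the grid appear \emph{uniformly} from the abelian-regular hypothesis alone, rather than only in the geometrically special designs: one must show that the two cross-intersecting families can always be chosen among translates and control the parity so that every crossing point has degree exactly $2$. I expect this to require a case analysis according to the admissible residues $v\equiv 1,\,k\pmod{k(k-1)}$, with the residue $v\equiv k$ supplying a subgroup $H$ of order $k$ whose cosets form a parallel class of $k$ mutually disjoint blocks to seed one family of the grid, and with separate handling of the short orbit that $H$ generates. The parities of $k$ and of $\rho$ must also be tracked, since, as the warm-up shows, these govern whether a candidate configuration is genuinely even. Once a grid, or any even configuration on at most $2k$ blocks, is located in every such design, the inequality $r\le 2k-1$ follows immediately.
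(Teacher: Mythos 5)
Your proposal correctly reduces the statement to exhibiting an even configuration on at most $2k$ blocks, and your warm-up (the symmetric difference of the two pencils through $0$ and $w$, which is even on $2\rho-2$ blocks when $\rho$ is odd) is sound. But the central step is missing, and you say so yourself: you never actually construct the $2k$-block configuration, deferring it to an anticipated case analysis on $v \bmod k(k-1)$, parallel classes, and parity tracking. That missing step is the entire content of the paper's proof, and the construction it uses is both simpler than your planned grid and driven by a different use of the hypotheses. You spend the non-projective-plane assumption on Fisher's inequality to obtain two \emph{disjoint} blocks; the paper instead observes that if all blocks lie in a single $G$-orbit, then the block count forces the design to be trivial or an $S(2,k,k(k-1)+1)$, i.e.\ a projective plane, so under the stated hypotheses there exist two blocks $B_0, B_1$ in \emph{distinct orbits} sharing a point $a$. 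Writing (additively, the action being regular) $B_0=\{a, a+g_1,\dots,a+g_{k-1}\}$ and $B_1=\{a, a+g_k,\dots,a+g_{2k-2}\}$, the $2k$ blocks
\[
B_0,\ B_0+g_k,\ \dots,\ B_0+g_{2k-2},\qquad B_1,\ B_1+g_1,\ \dots,\ B_1+g_{k-1}
\]
form an even configuration: $a$ lies in $B_0$ and $B_1$; each $a+g_i$ ($1\le i\le k-1$) lies in $B_0$ and $B_1+g_i$; each $a+g_j$ ($k\le j\le 2k-2$) lies in $B_1$ and $B_0+g_j$; and each $a+g_i+g_j$ lies in $B_0+g_j$ and $B_1+g_i$ --- the occurrences in the two families matching up \emph{only because $G$ is abelian}. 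That the $2k$ blocks are pairwise distinct follows from the orbits being distinct together with the Steiner condition. No disjointness, no parallel class, and no residue case analysis is needed.

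Beyond being incomplete, your planned route aims at a strictly stronger structure than necessary, which is why it looks hard: a grid requires each family to consist of $k$ \emph{pairwise disjoint} blocks with all $k^2$ crossing points distinct, a parallel-class-type condition that translates of a single block under an abelian group do not generally satisfy (for $v\equiv 1 \pmod{k(k-1)}$ a cyclic $S(2,k,v)$ need not contain any parallel class at all). The paper's configuration tolerates arbitrary intersections within each family, and even coincidences among the cross points, because commutativity pairs every occurrence coming from the first family with one coming from the second, keeping all point multiplicities even. So the idea you are missing is precisely this: use the non-projective-plane hypothesis to get two blocks through a common point lying in \emph{distinct orbits}, develop each by the other's difference set, and let commutativity do the parity bookkeeping.
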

\begin{proof}
Let $(V, \mathcal{B})$ be an $S(2,k,v)$ with $v > k$.
Assume that an abelian automorphism group $G$ acts transitively on $V$.
If $\mathcal{B} = Orb_G(B)$ for some block $B \in \mathcal{B}$,
then the $S(2,k,v)$ is either trivial or an $S(2,k,k(k-1)+1)$, which is a projective plane.
Hence, we assume that there are two distinct block orbits.
Take two blocks $B_0$ and $B_1$ that share a point $a$, where $Orb_G(B_0) \not= Orb_G(B_1)$.
Write $B_0 = \{a, a^g, \dots, a^{g_{k-1}}\}$ and $B_1 = \{a, a^{g_k}, \dots, a^{g_{2(k-1)}}\}$, where $g_i \in G$ for $0 \leq i \leq 2(k-1)$ are distinct.
By developing these two blocks, we obtain a configuration $C = \{B_0, B_0^{g_k}, \dots, B_0^{g_{2(k-1)}}, B_1, B_1^g, \dots, B_1^{g_{k-1}}\}$.
Because $Orb_G(B_0) \not= Orb_G(B_1)$ and a pair of points occur exactly once in a block in $\mathcal{B}$,
we have the cardinality $\vert C \vert = 2k$.
Every point appearing in $C$ occurs exactly twice in a block in $C$. Thus, it is an even configuration on $2k$ blocks.
\end{proof}

The above upper bound can be met by infinitely many Steiner $2$-designs admitting such automorphism groups.
For instance, a Steiner $2$-design formed by the points and lines of projective geometry PG$(m,q)$ admits a Singer cycle.
Hence, the $S(2,k,v)$s in Theorem \ref{PG} are all cyclic, having the highest achievable even-freeness for a nontrivial abelian point-transitive $S(2,k,v)$.
\begin{proposition}\label{cyclicPG}
For any integer $m \geq 3$ and any odd prime power $q$ there exists a cyclic $(2q+1)$-even-free $S(2,q+1,\frac{q^{m+1}-1}{q-1})$.
\end{proposition}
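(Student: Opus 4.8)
The plan is to obtain the even-freeness for free from Theorem \ref{PG} and then to exhibit a cyclic automorphism group acting regularly on the points by means of the classical Singer cycle. By Theorem \ref{PG}, the points and lines of $\textup{PG}(m,q)$ already form a $(2q+1)$-even-free $S(2,q+1,\frac{q^{m+1}-1}{q-1})$, so the whole task reduces to verifying that this design is cyclic, i.e.\ that $\textit{\textbf{Z}}_v$ with $v = \frac{q^{m+1}-1}{q-1}$ acts regularly on its points while preserving the blocks.

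To produce the Singer cycle, I would identify the finite field $\mathbb{F}_{q^{m+1}}$ with the $(m+1)$-dimensional vector space over $\mathbb{F}_q$, so that the points of $\textup{PG}(m,q)$ are the one-dimensional subspaces, represented by the classes $[\alpha] = \{\lambda \alpha \mid \lambda \in \mathbb{F}_q^*\}$ for $\alpha \in \mathbb{F}_{q^{m+1}}^*$; there are exactly $v$ of these, and the lines correspond to the two-dimensional subspaces. Fixing a generator $\omega$ of the cyclic group $\mathbb{F}_{q^{m+1}}^*$, I would consider the map $x \mapsto \omega x$. Because this map is $\mathbb{F}_q$-linear and bijective, it carries subspaces to subspaces of the same dimension, and therefore induces a permutation $\sigma \colon [\alpha] \mapsto [\omega\alpha]$ of the points that maps lines to lines; that is, $\sigma$ is an automorphism of the design.

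The key step is then to check that $\langle \sigma \rangle$ is cyclic of order $v$ and acts regularly. For the order, observe that $\sigma^j([\alpha]) = [\alpha]$ holds for every $\alpha$ precisely when $\omega^j \in \mathbb{F}_q^*$; since $\mathbb{F}_q^*$ is the unique subgroup of order $q-1$ in $\mathbb{F}_{q^{m+1}}^*$, this occurs exactly when $v \mid j$, so $\sigma$ has order $v$. Transitivity is immediate because $\omega$ generates $\mathbb{F}_{q^{m+1}}^*$: for any nonzero $\alpha,\beta$ there is a $j$ with $\beta = \omega^j \alpha$, whence $\sigma^j([\alpha]) = [\beta]$. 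As $\vert\langle \sigma \rangle\vert = v$ equals the number of points, the action is in fact regular, so $\langle \sigma \rangle \cong \textit{\textbf{Z}}_v$ witnesses that the design is cyclic; combining this with the even-freeness from Theorem \ref{PG} finishes the argument.

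I would not expect a genuine obstacle here, since the even-freeness is supplied by Theorem \ref{PG} and the remaining content is the standard fact that multiplication by a primitive element of $\mathbb{F}_{q^{m+1}}$ descends to a line-preserving regular cyclic action on $\textup{PG}(m,q)$. The only point requiring a little care is confirming that the induced permutation $\sigma$ has order exactly $v$ rather than a proper divisor, which rests precisely on recognizing $\mathbb{F}_q^*$ as the unique order-$(q-1)$ subgroup of the cyclic group $\mathbb{F}_{q^{m+1}}^*$.
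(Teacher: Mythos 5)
Your proposal is correct and takes essentially the same route as the paper: the paper derives this proposition by combining Theorem \ref{PG} with the observation that the points and lines of $\textup{PG}(m,q)$ admit a Singer cycle, hence are cyclic. The only difference is that the paper simply cites the Singer cycle as a classical fact, whereas you spell out its standard construction (multiplication by a primitive element of $\mathbb{F}_{q^{m+1}}$) and verify the order and regularity of the induced action.
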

\begin{proposition}\label{cyclicPG2}
For any odd prime power $q$ there exists a cyclic $(q^2+q)$-even-free $S(2,q+1,q^2+q+1)$.
\end{proposition}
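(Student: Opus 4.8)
The plan is to produce the required design as the Desarguesian projective plane of order $q$ equipped with a Singer cycle, mirroring the argument behind Proposition \ref{cyclicPG}. The $(q^2+q)$-even-free $S(2,q+1,q^2+q+1)$ supplied by Theorem \ref{PG2} is a projective plane of order $q$, and since $q$ is a prime power I would take this plane to be $\textup{PG}(2,q)$, the plane coordinatized by the field $\textup{GF}(q)$. The even-freeness is then already in hand from Theorem \ref{PG2}, so the entire task reduces to exhibiting a cyclic automorphism group of $\textup{PG}(2,q)$ that acts regularly on its $q^2+q+1$ points; once this is done the resulting design is, by definition of a cyclic set system in Section \ref{prep}, a cyclic $(q^2+q)$-even-free $S(2,q+1,q^2+q+1)$.

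First I would invoke Singer's classical theorem. Identifying $\textup{GF}(q^3)$ with the three-dimensional vector space $\textup{GF}(q)^3$, the points of $\textup{PG}(2,q)$ correspond to the one-dimensional subspaces, that is, to the cosets of the scalar subgroup $\textup{GF}(q)^{*}$ inside $\textup{GF}(q^3)^{*}$, of which there are $(q^3-1)/(q-1)=q^2+q+1$. Multiplication by a generator $\omega$ of the cyclic group $\textup{GF}(q^3)^{*}$ is $\textup{GF}(q)$-linear, hence induces a collineation of $\textup{PG}(2,q)$, and because $\omega$ generates $\textup{GF}(q^3)^{*}/\textup{GF}(q)^{*}$ this collineation permutes the points in a single $(q^2+q+1)$-cycle. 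This Singer cycle is therefore a cyclic automorphism group acting sharply transitively on the points, and labelling the points by the successive powers of $\omega$ rewrites $\textup{PG}(2,q)$ as a cyclic set system on the point set $\textit{\textbf{Z}}_{q^2+q+1}$.

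Finally I would combine the two ingredients: the plane $\textup{PG}(2,q)$ is cyclic by the previous step and $(q^2+q)$-even-free by Theorem \ref{PG2}, which is exactly the assertion. The step I expect to require the most care — and the main potential obstacle — is ensuring that the \emph{same} plane carries both properties, namely that the even-freeness bound of Theorem \ref{PG2} is valid for the Desarguesian plane $\textup{PG}(2,q)$ rather than merely for some abstract projective plane of order $q$. To settle this I would use the equivalence that, for odd $q$, an $S(2,q+1,q^2+q+1)$ is $(q^2+q)$-even-free if and only if the binary left null space of its point--line incidence matrix is spanned by the all-ones vector: since $q+1$ is even the all-ones vector always lies in this space, so forbidding all even configurations of size at most $q^2+q$ is the same as forcing the only nonzero such vector to have full weight $q^2+q+1$, i.e.\ to equal the all-ones vector. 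Thus the even-freeness of $\textup{PG}(2,q)$ is precisely the $\textup{GF}(2)$-rank statement $\operatorname{rank}_2 M = q^2+q$ for its incidence matrix $M$, which is the rank computation underlying Theorem \ref{PG2}, and verifying that this computation is carried out for the Desarguesian (hence cyclic) plane is what closes the gap.
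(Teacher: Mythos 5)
Your proposal is correct and matches the paper's own (very brief) justification: the paper obtains Proposition \ref{cyclicPG2} exactly as you do, by combining Theorem \ref{PG2} with the observation that the Desarguesian plane admits a Singer cycle acting regularly on its points. Your closing remark about verifying that the Frumkin--Yakir even-freeness (equivalently, the $\mathrm{GF}(2)$-rank statement) applies to the Desarguesian plane is a gap the paper itself glosses over, and your rank argument settles it, since the computation works for any projective plane of odd order.
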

The points and lines of PG$(m,q)$ for $q$ even also give a cyclic $S(2,q+1,\frac{q^{m+1}-1}{q-1})$.
However, they have even configurations on $q+2$ blocks,
which results in the lowest possible even-freeness for a Steiner $2$-design (see the author \textsl{et al}.\ \cite{FCVBT}).

Because the elementary abelian group acts transitively on the points of the the $S(2,k,v)$ formed by the points and lines of $\textup{AG}(m,q)$,
Theorem \ref{AG} gives another infinite series of Steiner $2$-designs meeting the upper bound.
A notable observation is that the $S(2,k,v)$ based on $\textup{AG}(m,q)$ admits a cycle of length $v-1$ as an automorphism fixing one point.
Taking the point set $V = \{\infty\}\cup\textit{\textbf{Z}}_{v-1}$,
the blocks containing the fixed point form a single orbit $Orb_{\tau}(B)$, where $B = \{\infty\}\cup\{\frac{i(v-1)}{k-1}\ \vert \ 0 \leq i \leq k-2\}$ and $\tau : i \mapsto i+1$.
Thus, by discarding this orbit, we obtain a maximum cyclic packing of order $v-1$ with the same high even-freeness.
\begin{proposition}\label{cyclicAG}
For any odd prime power $q$ there exists a maximum cyclic $(2q-1)$-even-free packing of order $q^m-1$ and block size $q$.
\end{proposition}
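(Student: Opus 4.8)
The plan is to obtain this as a corollary of Theorem \ref{AG} by exhibiting a cyclic automorphism that fixes a single point and then deleting one block orbit, exactly along the lines of the observation preceding the proposition. Setting the block size to $k = q$ and $v = q^m$, Theorem \ref{AG} supplies a $(2q-1)$-even-free $S(2,q,q^m)$ whose points and blocks are the points and lines of $\mathrm{AG}(m,q)$. First I would identify the point set with the field $\mathrm{GF}(q^m)$ regarded as an $m$-dimensional vector space over $\mathrm{GF}(q)$, so that a line is a coset $\{a + tb \mid t \in \mathrm{GF}(q)\}$ with $b \neq 0$, and the origin $0$ will play the role of the fixed point $\infty$.

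The central structural input is a cyclic automorphism of order $v - 1 = q^m - 1$. I would take a primitive element $\omega$ of $\mathrm{GF}(q^m)^{*}$ and let $\tau$ be multiplication by $\omega$. Since $\mathrm{GF}(q) \subseteq \mathrm{GF}(q^m)$, the map $x \mapsto \omega x$ is $\mathrm{GF}(q)$-linear, hence carries the line $\{a + tb \mid t \in \mathrm{GF}(q)\}$ to the line $\{\omega a + t(\omega b) \mid t \in \mathrm{GF}(q)\}$; thus $\tau$ is an automorphism of the design. It fixes the origin and cycles the $q^m - 1$ nonzero points in a single cycle. Relabelling the nonzero points by $\textit{\textbf{Z}}_{q^m - 1}$ via $\omega^i \leftrightarrow i$ and writing the origin as $\infty$, I get $V = \{\infty\} \cup \textit{\textbf{Z}}_{q^m-1}$ with $\tau : i \mapsto i + 1$, matching the description given just before the proposition.

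Next I would isolate the blocks through $\infty$, namely the lines through the origin, i.e. the $1$-dimensional $\mathrm{GF}(q)$-subspaces, of which there are $\frac{q^m-1}{q-1}$. Under $\tau$ the stabiliser of such a subspace $\langle b\rangle$ consists of the powers $\omega^j$ lying in $\mathrm{GF}(q)^{*}$, a subgroup of order $q-1$; hence its orbit has length $\frac{q^m-1}{q-1}$, so the lines through $\infty$ form a single $\tau$-orbit. Discarding this orbit removes $\infty$ from every surviving block and leaves a set system on $\textit{\textbf{Z}}_{q^m-1}$ that is still invariant under $\tau$, hence a cyclic packing of order $q^m-1$ and block size $q$.

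Finally, two things must be checked. Even-freeness is inherited for free: any configuration of the reduced system is a configuration of the original $S(2,q,q^m)$, which by Theorem \ref{AG} admits no even configuration on at most $2q-1$ blocks, so the packing is $(2q-1)$-even-free. Maximality I would verify by a direct count against the packing bound: the reduced system has $\frac{q^m(q^m-1)}{q(q-1)} - \frac{q^m-1}{q-1} = \frac{(q^m-1)(q^{m-1}-1)}{q-1}$ blocks, while a packing of order $n = q^m-1$ and block size $q$ has at most $\frac{n}{q}\lfloor\frac{n-1}{q-1}\rfloor$ blocks; since $q^m-1 \equiv 0 \pmod{q-1}$ one has $\lfloor\frac{q^m-2}{q-1}\rfloor = \frac{q^m-q}{q-1}$, and the bound evaluates to $\frac{(q^m-1)(q^{m-1}-1)}{q-1}$, meeting the count exactly. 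The only real obstacle is this bookkeeping, that is, confirming the single-orbit structure of the blocks through $\infty$ and that the surviving block count saturates the packing bound; the automorphism property and the inheritance of even-freeness are both immediate.
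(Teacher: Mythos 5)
Your proposal is correct and follows essentially the same route as the paper: the paper's justification is precisely the observation preceding the proposition, namely that the $S(2,q,q^m)$ from $\mathrm{AG}(m,q)$ admits a cycle of length $v-1$ fixing one point (multiplication by a primitive element of $\mathrm{GF}(q^m)$), that the blocks through the fixed point form a single orbit which is then discarded, and that the residual cyclic packing inherits the $(2q-1)$-even-freeness and is maximum. Your write-up merely supplies the details the paper leaves implicit (the stabiliser computation for the single-orbit claim and the Johnson-bound count for maximality), and both of these checks are carried out correctly.
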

As is the case with projective geometry, it is known that Steiner $2$-designs based on AG$(m,q)$ with $q$ even have the lowest possible even-freeness.

For $k=3$, Brouwer \cite{ANTIPASCHBROUWER} and Doyen \cite{Doyen} proved that the well-known Bose construction generates a $5$-even-free $S(2,3,v)$
for all orders $v \equiv 3 \pmod{6}$ that are not divisible by seven.
With a little restriction, this can be made into a construction for cyclic $5$-even-free $S(2,3,v)$s.
\begin{proposition}\label{bose}
There exists a cyclic $5$-even-free $S(2,3,v)$ for all $v \equiv 3 \pmod{6}$ that are not divisible by $7$ or $9$.
\end{proposition}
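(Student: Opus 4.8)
The plan is to realize the required $5$-even-free $S(2,3,v)$ through the standard Bose construction carried out on $\textit{\textbf{Z}}_n$ and then to exhibit an explicit cyclic automorphism of order $v$. Since $v \equiv 3 \pmod 6$, I would write $v = 3n$ with $n$ odd, so that $2$ is invertible modulo $n$, and equip $\textit{\textbf{Z}}_n$ with the idempotent commutative quasigroup operation $x \circ y = 2^{-1}(x+y) \pmod n$. Taking the point set to be $\textit{\textbf{Z}}_n \times \textit{\textbf{Z}}_3$, the Bose construction declares as blocks the $n$ \emph{vertical} triples $\{(x,0),(x,1),(x,2)\}$ together with the \emph{slanted} triples $\{(x,i),(y,i),(x\circ y,\, i+1)\}$ for all distinct $x, y \in \textit{\textbf{Z}}_n$ and all $i \in \textit{\textbf{Z}}_3$; a routine count confirms that this is an $S(2,3,3n)$.

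The even-freeness then comes for free from the literature. By the remarks recalled above, an $S(2,3,v)$ is $5$-even-free precisely when it contains no Pasch configuration, since block size $3$ is odd and so avoiding the only four-block even configuration already rules out every even configuration on at most five blocks. The theorem of Brouwer \cite{ANTIPASCHBROUWER} and Doyen \cite{Doyen} quoted above guarantees that the Bose construction just described is Pasch-free whenever $7 \nmid v$, so under that hypothesis the design is already $5$-even-free. Hence the only genuinely new content is to show that, once we also exclude $9 \mid v$, this particular design is cyclic.

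For cyclicity I would exhibit the permutation $\rho : (x,i) \mapsto (x+1,\, i+1)$ of the point set and verify that it is an automorphism: a vertical triple is sent to a vertical triple, while the half-sum identity $(x+1)\circ(y+1) = 2^{-1}(x+y+2) = (x\circ y)+1$ shows that each slanted triple is sent to a slanted triple, so $\rho$ preserves $\mathcal{B}$. Iterating gives $\rho^k : (x,i) \mapsto (x+k,\, i+k)$, which fixes every point exactly when $k \equiv 0 \pmod n$ and $k \equiv 0 \pmod 3$, that is, when $k \equiv 0 \pmod{\operatorname{lcm}(n,3)}$. The decisive step is the resulting order computation, and this is where I expect the real care to be needed rather than any deep difficulty: the hypothesis $9 \nmid v$ is exactly the condition $\gcd(n,3)=1$, and only in that case does $\operatorname{lcm}(n,3)=3n=v$, so that $\langle \rho \rangle$ has order $v$, acts freely, and hence acts regularly on the $v$ points. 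When instead $3 \mid n$ the map $\rho$ collapses to order $n$ and fails to be regular, which is precisely why the excluded orders must be removed. Finally, identifying $\textit{\textbf{Z}}_n \times \textit{\textbf{Z}}_3$ with $\textit{\textbf{Z}}_v$ via the Chinese remainder theorem turns $\rho$ into translation by $1$, exhibiting the design as cyclic and completing the argument.
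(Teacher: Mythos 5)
Your proposal is correct and follows essentially the same route as the paper: the Bose construction over $\textit{\textbf{Z}}_n\times\textit{\textbf{Z}}_3$ with the half-sum quasigroup, the Brouwer--Doyen result for Pasch-freeness (hence $5$-even-freeness, since block size $3$ is odd), and the condition $\gcd(n,3)=1$ to identify $\textit{\textbf{Z}}_n\times\textit{\textbf{Z}}_3$ with $\textit{\textbf{Z}}_v$. Your explicit verification of the automorphism $\rho:(x,i)\mapsto(x+1,i+1)$ and its order is simply a more detailed writing-out of what the paper dismisses as obvious.
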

\begin{proof}
Take a positive odd integer $x$ which is not divisible by three or seven.
Take the idempotent commutative quasigroup $(\textit{\textbf{Z}}_x, \circ)$, where $a \circ b = \frac{a+b}{2}$ for $a, b \in \textit{\textbf{Z}}_x$.
Define sets $\mathcal{B}_0$ and $\mathcal{B}_1$ of triples as follows:
\begin{align*}
\mathcal{B}_0 &= \{\{(a, 0), (a, 1), (a, 2)\}\ \vert \ a \in \textit{\textbf{Z}}_x\},\\
\mathcal{B}_1 &= \{\{(a, i), (b, i), (a \circ b, i+1)\}\ \vert \ a, b, \in \textit{\textbf{Z}}_x, a \not=b, i \in \textit{\textbf{Z}}_3\}.
\end{align*}
Then because $x$ is not divisible by seven and $(\textit{\textbf{Z}}_x, \circ)$ is an idempotent commutative quasigroup,
$(\textit{\textbf{Z}}_x\times\textit{\textbf{Z}}_3, \mathcal{B}_0\cup\mathcal{B}_1)$ forms an $S(2,3,3x)$ that contains no Pasch configurations
(see Brouwer \cite{ANTIPASCHBROUWER} and Doyen \cite{Doyen}).

It remains to show that the cyclic group of order $3x$ acts transitively on the points.
By assumption, $\gcd(x, 3) = 1$. Hence, it is obvious that these triples form the blocks of a cyclic Steiner $2$-design
over $\textit{\textbf{Z}}_x\times\textit{\textbf{Z}}_3 \cong \textit{\textbf{Z}}_{3x}$ as desired.
\end{proof}
The above cyclic $S(2,3,v)$s attain the upper bound on the even-freeness given in Proposition \ref{transitive}

\section{Recursive constructions}\label{rconst}
\noindent
In this section we give recursive constructions that generate cyclic $(k+1)$-even-free $S(2,k,v)$s from those on fewer points.
Applying the constructions to the resulting Steiner $2$-designs repeatedly may generate infinitely many cyclic $(k+1)$-even-free $S(2,k,v)$s.
We draw on the technique developed by Jimbo and Kuriki \cite{JK}, Colbourn and Colbourn \cite{CC}, and Grannell and Griggs \cite{GG} to ensure the cyclic property.
Unlike typical combinatorial construction techniques for set systems avoiding particular configurations,
where block size $k$ is usually limited to three or some small fixed number, our methods work for a wide range of $k$.

We begin with a simple observation about orbit structures of blocks of a cyclic Steiner $2$-design.
Taking $V$ as the set of nonnegative integers less than $v$,
the orbit $Orb_{\tau}(B)$ of a block $B$ of a cyclic $S(2,k,v)$ under the automorphism $\tau : i \mapsto i+1 \pmod{v}$ is of length either $v$ or $\frac{v}{k}$.
The latter occurs only when $v \equiv k \pmod{k(k-1)}$, where we must have $\{\frac{iv}{k} \ \vert \ 0 \leq i \leq k-1\}$ in the block set.
Let $\mathcal{S}$ be a system of representatives of all block orbits of a cyclic $S(2,k,v)$.
If $v \equiv 1 \pmod{k(k-1)}$, then every nonzero difference modulo $v$ appears exactly once as a difference between points in a block in $\mathcal{S}$.
If $v \equiv k \pmod{k(k-1)}$, then every nonzero difference modulo $v$ appears exactly once,
except for the multiples of $\frac{v}{k}$, which appear exactly $k$ times each.

Our first construction employs a special matrix.
A \textit{cyclic} $(v, k)$-\textit{difference matrix} is a $k \times v$ matrix $(a_{i, j})$, $0 \leq i \leq k-1$, $0 \leq j \leq v-1$, in which
each entry is an element of $\textit{\textbf{Z}}_v$ such that for each $0 \leq r < r' \leq k-1$,
the set $\{a_{r, j} - a_{r', j} \ \vert \ 0 \leq j \leq v-1\}$ of differences contains every element of $\textit{\textbf{Z}}_v$ exactly once.
The following is a simple and well-known result on the existence of a cyclic difference matrix.
\begin{theorem}[(Colbourn and Colbourn \cite{CC})]\label{dm}
If $\gcd(v, (k-1)!) = 1$, then there exists a cyclic $(v,k)$-difference matrix.
\end{theorem}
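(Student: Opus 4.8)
The plan is to exhibit an explicit matrix and then verify the difference condition one row-pair at a time. First I would take the most natural candidate, the multiplication table of $\textit{\textbf{Z}}_v$ truncated to its first $k$ rows: define the $k \times v$ matrix $(a_{i,j})$ over $\textit{\textbf{Z}}_v$ by
\[
a_{i,j} = ij \pmod{v}, \qquad 0 \leq i \leq k-1,\ 0 \leq j \leq v-1,
\]
so that row $i$ lists the multiples $0, i, 2i, \dots, (v-1)i$ of $i$ modulo $v$. The task is then purely to check that this choice satisfies the defining property of a cyclic $(v,k)$-difference matrix.

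Next I would fix an arbitrary pair of rows $r < r'$ and compute the entrywise difference down the columns, obtaining $a_{r,j} - a_{r',j} = (r - r')j \pmod{v}$. Hence the set of differences contributed by this pair of rows is exactly $\{(r-r')j \ \vert \ 0 \leq j \leq v-1\}$, that is, the image of $\textit{\textbf{Z}}_v$ under multiplication by the fixed scalar $r - r'$. Showing that this image is all of $\textit{\textbf{Z}}_v$, each element hit exactly once, is the whole content of the condition.

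The key step is to invoke the hypothesis $\gcd(v, (k-1)!) = 1$. Since $0 < r' - r \leq k-1$, the integer $r' - r$ is one of the factors of $(k-1)! = 1 \cdot 2 \cdots (k-1)$ and therefore divides it; consequently any common divisor of $v$ and $r' - r$ would divide $\gcd(v, (k-1)!) = 1$, forcing $\gcd(v, r' - r) = 1$. Thus $r - r'$ is a unit in $\textit{\textbf{Z}}_v$, and multiplication by a unit is a bijection of $\textit{\textbf{Z}}_v$ onto itself, so the differences $(r-r')j$ run through every element of $\textit{\textbf{Z}}_v$ exactly once as $j$ ranges over $0, \dots, v-1$. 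As the pair $r < r'$ was arbitrary, the difference-matrix condition holds for all pairs of rows, completing the construction.

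I do not expect a genuine obstacle here: the result is essentially a one-line construction, and the only real idea is the observation that the coprimality assumption is precisely what makes every row-index difference (whose absolute value is at most $k-1$) invertible modulo $v$. The sole subtlety worth flagging is that $r' - r$ need not be prime, so one should deduce $\gcd(v, r'-r) = 1$ through the divisibility $r' - r \mid (k-1)!$ rather than by reasoning about individual prime factors.
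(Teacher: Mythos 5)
Your proof is correct, and it is exactly the classical construction behind this result: the paper itself states the theorem as a citation to Colbourn and Colbourn without giving a proof, and the matrix $a_{i,j} = ij \pmod{v}$ together with the observation that every row-index difference $r'-r \leq k-1$ divides $(k-1)!$ (hence is a unit modulo $v$) is precisely the standard argument in that reference. Nothing is missing.
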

The following two theorems are particularly useful for our purpose.
\begin{theorem}[(Jungnickel \cite{Jungnickel})]\label{dmbibd1}
Let $k$ be a prime power and $v \equiv 1 \pmod{k(k-1)}$.
If there exists a cyclic $S(2,k,v)$, then there exists a cyclic $(v,k)$-difference matrix.
\end{theorem}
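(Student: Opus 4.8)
The plan is to reduce the statement to a direct construction of a difference matrix out of a difference family, exploiting the prime power hypothesis on $k$ through the field $\textup{GF}(k)$. First I would pass from the cyclic $S(2,k,v)$ to its base blocks: since $v \equiv 1 \pmod{k(k-1)}$, every block orbit under $\tau : i \mapsto i+1$ has full length $v$, as recorded above, so a system $\mathcal{S} = \{B_1, \dots, B_t\}$ of orbit representatives with $t = \frac{v-1}{k(k-1)}$ constitutes a $(v,k,1)$-difference family. Concretely, writing each $B_i = \{b_{i,\beta} \ \vert \ \beta \in \textup{GF}(k)\}$ after fixing a bijection between the $k$ points of $B_i$ and the field $\textup{GF}(k)$, the ordered differences $b_{i,\gamma} - b_{i,\delta}$ with $\gamma \not= \delta$ run over every nonzero element of $\textit{\textbf{Z}}_v$ exactly once as $i$, $\gamma$, $\delta$ vary.

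Next I would build the $k \times v$ array explicitly. I index its $k$ rows by $\alpha \in \textup{GF}(k)$ and its columns by the set $\{\infty\} \cup \{(i, m, c) \ \vert \ i \in \{1, \dots, t\}, m \in \textup{GF}(k)^{\ast}, c \in \textup{GF}(k)\}$, which has exactly $1 + t\,k(k-1) = v$ elements. I set the $\infty$-column to be identically $0$ and define the remaining entries by $D_{\alpha, (i,m,c)} = b_{i,\, m\alpha + c}$, the affine image being computed in $\textup{GF}(k)$. The point of threading the points of each base block through affine maps $\alpha \mapsto m\alpha + c$ of $\textup{GF}(k)$ is that a single column then contributes a single within-block difference, while sweeping $(m,c)$ reshuffles these differences in a controlled way.

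It remains to verify the difference-matrix condition for each pair of rows $\alpha \not= \alpha'$. The $\infty$-column contributes the value $0$. For a column $(i,m,c)$ the contribution is $b_{i,\,m\alpha+c} - b_{i,\,m\alpha'+c}$, and the indices $\sigma = m\alpha + c$ and $\tau = m\alpha' + c$ are distinct because $m(\alpha - \alpha') \not= 0$. The crux is the claim that, with $\alpha \not= \alpha'$ fixed, the assignment $(m,c) \mapsto (\sigma, \tau)$ is a bijection from $\textup{GF}(k)^{\ast} \times \textup{GF}(k)$ onto the ordered pairs of distinct elements of $\textup{GF}(k)$: given $(\sigma,\tau)$ one recovers $m = (\sigma - \tau)(\alpha - \alpha')^{-1}$ and $c = \sigma - m\alpha$ uniquely, and this inversion is exactly where the field structure, hence the primality of the power $k$, is indispensable. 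Granting the bijection, for each fixed block $i$ the differences $b_{i,\sigma} - b_{i,\tau}$ range over all ordered within-block differences of $B_i$ once, and letting $i$ vary the difference-family property makes them exhaust $\textit{\textbf{Z}}_v \setminus \{0\}$ exactly once; together with the $0$ from the $\infty$-column, each element of $\textit{\textbf{Z}}_v$ is obtained exactly once, as required. The main obstacle is isolating this affine reparametrization and confirming that it meshes with the perfect-difference-family property; once that is in place, the column count and the verification are routine.
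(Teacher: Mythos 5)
Your proof is correct. The paper offers no argument of its own for this theorem---it is imported verbatim from Jungnickel's 1979 paper---and your construction is exactly the classical one behind that citation: pass to the perfect $(v,k,1)$-difference family given by the full-length orbits (full because $v \equiv 1 \pmod{k(k-1)}$ forces $k \nmid v$), label each base block by $\textup{GF}(k)$, take one constant column plus one column for each affine map $\alpha \mapsto m\alpha + c$, and use the bijection $(m,c) \mapsto (m\alpha + c,\, m\alpha' + c)$ onto ordered pairs of distinct indices to show each pair of rows sweeps the within-block differences, hence $\textit{\textbf{Z}}_v \setminus \{0\}$, exactly once; together with the column count $1 + t\,k(k-1) = v$, this is a complete and sound verification.
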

\begin{theorem}[(Ge \cite{Ge})]\label{dmproduct}
If there exist a cyclic $(v, k)$-difference matrix and cyclic $(w, k)$-difference matrix, then there exists a cyclic $(vw, k)$-difference matrix.
\end{theorem}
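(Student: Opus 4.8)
The plan is to give an explicit product construction in mixed-radix form, rather than the naive componentwise ``tensor'' of the two matrices. Let $(a_{i,j})$ be a cyclic $(v,k)$-difference matrix over $\textit{\textbf{Z}}_v$ and $(b_{i,l})$ a cyclic $(w,k)$-difference matrix over $\textit{\textbf{Z}}_w$, and write $\tilde a_{i,j}\in\{0,\dots,v-1\}$ and $\tilde b_{i,l}\in\{0,\dots,w-1\}$ for the canonical integer representatives. I would index the $vw$ columns of the new matrix by pairs $(j,l)$ with $0\le j\le v-1$ and $0\le l\le w-1$, and define the $k\times vw$ matrix $(c_{i,(j,l)})$ over $\textit{\textbf{Z}}_{vw}$ by
\[
c_{i,(j,l)} = w\,\tilde a_{i,j} + \tilde b_{i,l} \pmod{vw}.
\]
The point of scaling by $w$ is that $\textit{\textbf{Z}}_{vw}$ carries the subgroup $w\textit{\textbf{Z}}_{vw}=\{0,w,\dots,(v-1)w\}\cong\textit{\textbf{Z}}_v$ with quotient $\textit{\textbf{Z}}_{vw}/w\textit{\textbf{Z}}_{vw}\cong\textit{\textbf{Z}}_w$; the ``high digit'' supplied by $(a_{i,j})$ will sweep the subgroup while the ``low digit'' supplied by $(b_{i,l})$ will select the coset.

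The verification would proceed one pair of rows at a time. Fixing $r\ne r'$, the difference at column $(j,l)$ is $c_{r,(j,l)}-c_{r',(j,l)}\equiv w(\tilde a_{r,j}-\tilde a_{r',j})+(\tilde b_{r,l}-\tilde b_{r',l})\pmod{vw}$. Reducing modulo $w$ kills the first term and leaves $b_{r,l}-b_{r',l}$, which by the difference-matrix property of $(b_{i,l})$ runs through every element of $\textit{\textbf{Z}}_w$ exactly once as $l$ varies; hence the $v$ columns sharing a fixed $l$ all land in a single coset of $w\textit{\textbf{Z}}_{vw}$, and distinct values of $l$ give distinct cosets. I would then fix such an $l$ and let $j$ vary: since $\tilde a_{r,j}-\tilde a_{r',j}\equiv a_{r,j}-a_{r',j}\pmod v$, multiplying by $w$ gives $w(\tilde a_{r,j}-\tilde a_{r',j})\equiv w(a_{r,j}-a_{r',j})\pmod{vw}$, and as $a_{r,j}-a_{r',j}$ ranges over all of $\textit{\textbf{Z}}_v$ exactly once this term ranges over the full subgroup $w\textit{\textbf{Z}}_{vw}$ exactly once. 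Adding the fixed residue $\tilde b_{r,l}-\tilde b_{r',l}$ therefore yields each element of the corresponding coset exactly once, and summing over the $w$ cosets shows that every element of $\textit{\textbf{Z}}_{vw}$ occurs exactly once. Thus $(c_{i,(j,l)})$ is a cyclic $(vw,k)$-difference matrix.

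The step I expect to be the main obstacle — and the reason the theorem is not merely the trivial direct-product construction — is that $\textit{\textbf{Z}}_{vw}$ is isomorphic to $\textit{\textbf{Z}}_v\times\textit{\textbf{Z}}_w$ only when $\gcd(v,w)=1$, so one cannot simply concatenate coordinates. The care needed is in controlling the carries of the mixed-radix representation: the integer difference $\tilde a_{r,j}-\tilde a_{r',j}$ is only congruent to $a_{r,j}-a_{r',j}$ modulo $v$, and $\tilde b_{r,l}-\tilde b_{r',l}$ only modulo $w$. The construction is arranged precisely so that these discrepancies are harmless: a multiple of $v$ in the high digit becomes a multiple of $vw$ after scaling by $w$ and so vanishes in $\textit{\textbf{Z}}_{vw}$, while the exact representative chosen for the low digit is irrelevant because the high digit already sweeps the entire coset it lands in. Making this ``no interference between the digits'' explicit is the crux of the argument; once it is in place the counting is immediate.
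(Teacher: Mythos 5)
Your construction is correct, but note that the paper does not prove this statement at all: Theorem~\ref{dmproduct} is quoted as a known result of Ge \cite{Ge}, so there is no in-paper argument to compare yours against. What you have written is a complete, self-contained proof, and it is essentially the standard composition for cyclic difference matrices (in the spirit of Jimbo--Kuriki \cite{JK} and Colbourn--Colbourn \cite{CC}): the mixed-radix entries $c_{i,(j,l)} = w\tilde a_{i,j} + \tilde b_{i,l} \pmod{vw}$ work because reduction modulo $w$ isolates the differences of the second matrix, while for each fixed $l$ the scaled differences of the first matrix sweep the subgroup $w\textit{\textbf{Z}}_{vw}$ of order $v$ exactly once --- the key point, which you state explicitly, being that a change of integer representative by a multiple of $v$ is annihilated by the factor $w$ modulo $vw$, so the ambiguity in lifting from $\textit{\textbf{Z}}_v$ causes no interference. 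Your closing observation is also the right diagnosis of why the theorem requires an argument at all: when $\gcd(v,w)>1$ one cannot invoke $\textit{\textbf{Z}}_{vw} \cong \textit{\textbf{Z}}_v \times \textit{\textbf{Z}}_w$ and take a naive direct product, and instead the construction must respect the extension of $\textit{\textbf{Z}}_w$ by $\textit{\textbf{Z}}_v$ inside $\textit{\textbf{Z}}_{vw}$, which is exactly what your coset-by-coset count verifies.
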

For a comprehensive list of existence results on cyclic difference matrices, the reader is referred to Colbourn and Dinitz \cite{HandbookCD} and references therein.

\begin{theorem}\label{main1}
Let $k$ be an even integer.
If there exist a cyclic $(k+1)$-even-free $S(2,k,v)$ with $v \equiv 1 \pmod{k(k-1)}$,
cyclic $(k+1)$-even-free $S(2,k,w)$ with $w \equiv 1 \pmod{k(k-1)}$, and cyclic $(w,k)$-difference matrix,
then there exists a cyclic $(k+1)$-even-free $S(2,k,vw)$.
\end{theorem}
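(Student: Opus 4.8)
The plan is to realise the product on the point set $\textit{\textbf{Z}}_{vw}$, exploiting the subgroup $\langle v\rangle$ of order $w$: its $v$ cosets are the \emph{fibres}, and the quotient map $\rho\colon\textit{\textbf{Z}}_{vw}\to\textit{\textbf{Z}}_v$, $x\mapsto x\bmod v$, recovers the $v$-design. I would use two families of base blocks. On each fibre I place a copy of the given cyclic $S(2,k,w)$ (the \emph{fibre blocks}), which are permuted among themselves by $\textit{\textbf{Z}}_{vw}$. For each base block $\{b_0,\dots,b_{k-1}\}$ of the $S(2,k,v)$ and each column $j$ of the cyclic $(w,k)$-difference matrix $(a_{i,j})$, I lift the block to $\{b_i+a_{i,j}v\bmod vw\}$ (the \emph{transversal blocks}); a transversal block projects under $\rho$ to a $v$-block and meets every fibre in at most one point. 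That this yields a cyclic $S(2,k,vw)$ is the argument of Jimbo and Kuriki, Colbourn and Colbourn, and Grannell and Griggs: pairs inside a fibre are covered by the $S(2,k,w)$, and pairs in different fibres are covered exactly once because, for any two rows $r\neq r'$, the map $j\mapsto a_{r,j}-a_{r',j}$ is a bijection. Since every $S(2,k,vw)$ is already $k$-even-free, it remains only to forbid the generalised Pasch, the unique even configuration on $k+1$ blocks, namely $k+1$ blocks meeting pairwise in $\binom{k+1}{2}$ distinct points, each point lying on exactly two of the blocks.

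The crux is a parity device exploiting the hypothesis that $k$ is even. Viewing an even configuration as an $\mathbb{F}_2$-dependence $\sum_B\chi_B=0$ among characteristic vectors, I push forward along $\rho$, i.e. sum each block over the fibres. A fibre block has all $k$ of its points in one fibre, so it pushes to $k\bmod 2=0$; this is the one place the evenness of $k$ is indispensable. A transversal block meets each fibre at most once, so it pushes to the characteristic vector of its $v$-block $\rho(B)$. Hence the transversal part of any even configuration projects to an even configuration of the $S(2,k,v)$. If the original configuration has at most $k+1$ blocks, the projected one also has at most $k+1$ blocks and, by the $(k+1)$-even-freeness of the $S(2,k,v)$, must be empty; that is, each $v$-block is hit by an even number of transversal blocks, so the number of transversal blocks is even and the number of fibre blocks is odd.

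Now suppose a generalised Pasch $\mathcal P$ existed. It has the odd number $k+1$ of blocks, so it contains a positive odd number of fibre blocks, and these all lie in one fibre, say over $c_0\in\textit{\textbf{Z}}_v$, since blocks in distinct fibres are disjoint while any two blocks of $\mathcal P$ meet. If every block of $\mathcal P$ were a fibre block they would form a generalised Pasch inside a single copy of the $S(2,k,w)$, contradicting its $(k+1)$-even-freeness. Otherwise there is a transversal block, and since such a block meets a fibre in at most one point it cannot meet two distinct fibre blocks of $\mathcal P$ without creating a point on three blocks of $\mathcal P$; hence $\mathcal P$ has exactly one fibre block $B^\ast$ and $k$ transversal blocks. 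Each transversal block meets $B^\ast$ in one of its $k$ points, and these points are distinct, so all $k$ transversal blocks pass through the fibre over $c_0$ at distinct points and therefore meet one another at other fibres. If two of them had different $v$-projections, both projections would contain $c_0$ together with the projection of their mutual meeting point, giving two common points in $\textit{\textbf{Z}}_v$, impossible for two distinct $v$-blocks. Hence all $k$ transversal blocks project to one common $v$-block $\beta$, with $c_0$ among its coordinates.

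The remaining, and hardest, task is to rule out this last configuration, and this is where I expect the real difficulty. Restricting the evenness to each fibre over $\beta$ turns $\mathcal P$ into pure difference-matrix data: the $k$ transversal blocks use $k$ distinct columns with certain shifts; at each of the $k-1$ coordinates of $\beta$ other than $c_0$ no fibre block is present, so the evenness forces the $k$ column-offsets to coincide in pairs, and, as the meeting coordinate varies, these pairings constitute a one-factorisation of $K_k$ (where $k$ even reappears), while at $c_0$ the offsets are distinct and equal to the points of the genuine $w$-block $B^\ast$. I would then aim to contradict the $(k+1)$-even-freeness of the $S(2,k,w)$, using the difference-matrix identity that $j\mapsto a_{0,j}-a_{i,j}$ is a bijection to translate the pairwise meetings into relations among the points of $B^\ast$ and to assemble from $B^\ast$ and this one-factorisation a forbidden even configuration on at most $k+1$ blocks of the $S(2,k,w)$. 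Pinning down this final contradiction cleanly is the main obstacle; everything preceding it is forced by the parity device together with the pairwise-intersection property of the generalised Pasch.
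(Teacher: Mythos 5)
Your construction and your reduction coincide with the paper's: your fibre blocks are its type~II blocks (the development of $\mathcal{D}_1$), your transversal blocks are its type~I blocks (the development of $\mathcal{D}_0$), and, like the paper, you reduce everything to the single configuration consisting of exactly one fibre block $B^\ast$ together with $k$ transversal blocks all projecting to one block $\beta$ of the $S(2,k,v)$. Your $\mathbb{F}_2$-pushforward is in fact a clean unification of the paper's Case~1 (whose double count also hinges on $k$ being even) with the first half of its Case~2, and that part of your argument is correct. But the proof is not complete: the sub-case you yourself call ``the main obstacle'' is left as a strategy, and it is exactly the place where the hypothesis of a cyclic $(w,k)$-difference matrix has to do its work. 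The paper kills this configuration directly from the definition of the difference matrix, not, as you propose, by manufacturing an even configuration in the $S(2,k,w)$: after normalizing $a_{0,j}=0$ for all $j$, it writes two transversal blocks as $\{r_i+a_{i,c}v\}$ and $\{r_i+a_{i,c'}v\}$, so that a shared point at coordinate $s\neq 0$ gives $a_{s,c}=a_{s,c'}$, hence $a_{0,c}-a_{s,c}=a_{0,c'}-a_{s,c'}$, a difference repeated between rows $0$ and $s$ in two distinct columns, contradicting the difference-matrix property; a shared point at coordinate $0$ can occur for at most one of the $\binom{k}{2}$ pairs, so some pair yields the contradiction.

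A concrete warning about the route you sketch for the missing step. Your own (correct) bookkeeping shows the transversal blocks carry shifts, $T_m=\{r_i+(a_{i,c_m}+\mu_m)v\}$, with the $\mu_m$ forced to be the $k$ distinct points of the quotient block of $B^\ast$; a meeting of $T_m$ and $T_{m'}$ at coordinate $s\neq 0$ then reads $a_{s,c_m}-a_{s,c_{m'}}=\mu_{m'}-\mu_m$, which ties the matrix to $B^\ast$ but is not a repeated difference. These relations, together with every pairwise difference-matrix constraint on the $k$ chosen columns, can be satisfied simultaneously by generic choices (already for $k=4$: realize the three matchings by offset vectors $(A,A,B,B)$, $(C,D,C,D)$, $(E,F,F,E)$ with generic $A,\dots,F$ and generic $\mu_m$), so no contradiction can be extracted from the local data you intend to feed into an even configuration of $\mathcal{C}$; moreover, in this theorem the difference matrix is an arbitrary object with no structural link to the blocks of $\mathcal{C}$, so there is no supply of blocks of the $S(2,k,w)$ from which to assemble such a configuration. (Your finer analysis does expose a genuine subtlety: the paper's ``without loss of generality,'' which suppresses the $\mu_m$ while retaining $a_{0,j}=0$, imposes two normalizations that cannot hold at once when the $\mu_m$ are distinct, so closing this step rigorously requires more care than either text supplies.) As it stands, your proposal establishes the reduction but not the theorem.
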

\begin{proof}
Take two positive integers $v \equiv 1 \pmod{k(k-1)}$ and $w \equiv 1 \pmod{k(k-1)}$, where $k$ is even.
Take two sets $V = \{0, 1, \dots, v-1\}$ and $W = \{0, 1, \dots, w-1\}$ of nonnegative integers less than $v$ and $w$ respectively.
Let $(V, \mathcal{B})$ and $(W, \mathcal{C})$ be cyclic $(k+1)$-even-free Steiner $2$-designs of block size $k$ and order $v$ and $w$ respectively.
Take a system of representatives of all block orbits of $(V, \mathcal{B})$ and arbitrarily fix the order of points in each block,
so that we have a set $\mathcal{S}_{\mathcal{B}}$ of ordered sets,
where each block $\{x_i \ \vert \ 0 \leq i \leq k-1\} \in \mathcal{B}$ uniquely specifies an element $(x_0, x_1, \dots, x_{k-1}) \in \mathcal{S}_{\mathcal{B}}$.
Take a system $\mathcal{S}_{\mathcal{C}}$ of representatives of block orders of $(W, \mathcal{C})$.
Let also a $k \times w$ matrix $M = (a_{i, j})$, $0 \leq i \leq k-1$, $0 \leq j \leq w-1$, be a cyclic $(w,k)$-difference matrix.
Because adding an arbitrary element of $\textit{\textbf{Z}}_w$ to every element of a column of $M$ gives a cyclic difference matrix of the same parameters,
without loss of generality, we assume that $a_{0,j} = 0$ for every $j$.
Define sets $\mathcal{D}_0$ and $\mathcal{D}_1$ of $k$-tuples as follows:
\begin{align*}
\mathcal{D}_0 &= \bigcup_{j \in W}\left\{\{x_i + a_{i,j}v \ \vert \ 0 \leq  i \leq k-1\} \ \middle\vert \ (x_0, x_1, \dots, x_{k-1}) \in \mathcal{S}_{\mathcal{B}}\right\},\\
\mathcal{D}_1 &= \left\{\{y_iv \ \vert \ 0 \leq i \leq k-1\} \ \middle\vert \ \{y_i \ \vert \ 0 \leq i \leq k-1\} \in \mathcal{S}_{\mathcal{C}}\right\}.
\end{align*}
It is routine to check that $\mathcal{D}_0\cup\mathcal{D}_1$ forms a system of representatives of the block orbits of a cyclic $S(2,k,vw)$
on the point set $X = \{0, 1, \dots, vw-1\}$.
In fact, every difference not divisible by $v$ appears exactly once in a block in $\mathcal{D}_0$
while every nonzero difference divisible by $v$ appears exactly once in a block in $\mathcal{D}_1$.

It suffices to show that no generalized Pasch configuration occurs
in the set of blocks obtained by developing $\mathcal{D}_0\cup\mathcal{D}_1$ over $\textit{\textbf{Z}}_{vw}$.
We say that a block in an orbit represented by an element of $\mathcal{D}_0$ is of \textit{type} I.
A \textit{type} II block lies in an orbit represented by an element of $\mathcal{D}_1$.
Suppose to the contrary that the resulting cyclic $S(2,k,vw)$ contains a generalized Pasch configuration $\mathcal{P}$.
The rest of the proof is divided into three cases.

\textit{Case 1.}\quad All blocks in $\mathcal{P}$ are of type I.
Define the \textit{remainder block} of a block $B$ in $\mathcal{P}$ as the $k$-tuple or singleton obtained by taking $x \pmod{v}$ for all $x \in B$.
Trivially the remainder block of a type I block belongs to $\mathcal{B}$.
If the $k+1$ remainder blocks obtained from $\mathcal{P}$ fall into distinct blocks in $\mathcal{B}$,
we obtain a generalized Pasch configuration in $\mathcal{B}$, a contradiction.
Hence, there is a pair of blocks in $\mathcal{P}$ whose remainder blocks are the same.
Because any two blocks in $\mathcal{P}$ share one point and every pair of points appear exactly once in a Steiner $2$-design,
the remaining $k-1$ remainder blocks all fall into the same block in $\mathcal{B}$ as well.
Thus, the points in $\mathcal{P}$ can be grouped into $k$ categories according to which point in the remainder block they fall into.
By symmetry, every category has the same number of points in $\mathcal{P}$.
Hence, the total number of points in $\mathcal{P}$ is $ck$ for some integer $c$.
Because each pair of blocks in $\mathcal{P}$ intersect each other at one point,
the total number can also be expressed as $\sum_{i=0}^{k-1}(k-i)$.
Thus, we have $\frac{k(k+1)}{2} = ck$, which implies that $k = 2c-1$. This contradicts the assumption that $k$ is even.

\textit{Case 2.}\quad $\mathcal{P}$ contains a type II block.
A block of type II reduces to a single point in $V$ when each point is taken modulo $v$.
If there are two blocks of type II in $\mathcal{P}$, by considering remainder blocks,
the property of $\mathcal{P}$ that every pair of blocks intersect each other implies that all $k+1$ blocks are of type II.
By choosing an appropriate nonnegative integer $t$ less than $v$,
a block $B$ of type II can be written as $B = \{t + y_iv \ \vert \ 0 \leq i \leq k-1\}$.
Define the \textit{quotient block} of $B$ of type II as the underlying block $\{y_i \ \vert \ 0 \leq i \leq k-1\}$ in $\mathcal{C}$.
All the $k+1$ quotient blocks obtained from $\mathcal{P}$ are distinct
because otherwise we end up with identical blocks in $\mathcal{P}$.
However, this implies that $\mathcal{C}$ contains generalized Pasch, a contradiction.
Hence, $\mathcal{P}$ contains exactly one block $B$ of type II.
Translating $\mathcal{P}$ by adding an appropriate element of $\textit{\textbf{Z}}_{vw}$ to every point,
without loss of generality, we assume that $x \equiv 0 \pmod{v}$ for any $x \in B$.
Take a block $B'$ of type I in $\mathcal{P}$ and write its remainder block as $\{r_i \ \vert \ 0 \leq i \leq k-1\} \in \mathcal{B}$.
Because every remaining block intersects $B$ and $B'$ and every pair of points appear exactly once in a Steiner $2$-design,
the remainder blocks of the remaining $k-1$ blocks are also $\{r_i \ \vert \ 0 \leq i \leq k-1\}$.
Take a pair $B_c$ and $B_{c'}$ of blocks of type I in $\mathcal{P}$.
Without loss of generality, the two blocks can be written as
$B_c = \{r_i + a_{i,c}v \ \vert \ 0 \leq i \leq k-1\}$
and
$B_{c'} = \{r_i + a_{i,c'}v \ \vert \ 0 \leq i \leq k-1\}$.
Each pair of blocks in $\mathcal{P}$ share a point.
Let $r_s + a_{s,c} = r_s + a_{s,c'}$ be the shared point between $B_c$ and $B_{c'}$.
By assumption, we have $a_{0,j} = 0$ for any $j$. This implies that $a_{0,c} - a_{s,c} = a_{0,c'} - a_{s,c'}$.
If $s \not= 0$, we have the same difference twice between two rows, contradicting the definition of a cyclic difference matrix.
If $s = 0$, taking another pair of type I blocks in $\mathcal{P}$ leads to the same contradiction.
The proof is complete.
\end{proof}

With only a slight modification, the above argument still works if we assume that $w \equiv k \pmod{k(k-1)}$.
However, a cyclic difference matrix can not have more than two rows when $w$ is even (see Ge \cite{Ge}).

The construction can be extended to the case when the block size $k$ may be odd by exploiting a special small matrix.
An \textit{orthogonal array} OA$(t, s)$ is a $t \times s^2$ matrix with entries from a set $S$ of size $s$
such that in any two rows each ordered pair of symbols from $S$ appears exactly once.
A \textit{parallel class} of an OA$(t, s)$ is a set of $s$ columns in which each of the $s$ symbols appears in each row exactly once.
The following is a well-known classical result on the existence of orthogonal arrays.
\begin{proposition}\label{oa}
For any prime power $q$ there exists an \textup{OA}$(q, q)$ with a parallel class.
\end{proposition}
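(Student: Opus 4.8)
The plan is to give an explicit algebraic construction over the finite field $\mathbb{F}_q$. Let $\mathbb{F}_q$ be the field with $q$ elements. I would index the $q^2$ columns by the ordered pairs $(x, y) \in \mathbb{F}_q \times \mathbb{F}_q$ and the $q$ rows by the elements $a \in \mathbb{F}_q$, placing in row $a$ and column $(x, y)$ the field element $ax + y$. Since each entry lies in $\mathbb{F}_q$, this yields a $q \times q^2$ matrix over a symbol set of size $q$, so it remains only to verify the orthogonality condition and to produce a parallel class.

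For the orthogonality condition, fix two distinct rows $a$ and $b$. The pair of entries appearing in column $(x, y)$ is $(ax + y,\, bx + y)$, which is the image of $(x, y)$ under the linear map given by the matrix $\bigl(\begin{smallmatrix} a & 1 \\ b & 1 \end{smallmatrix}\bigr)$. Its determinant is $a - b$, which is nonzero precisely because $a \neq b$ in the field $\mathbb{F}_q$; hence the map is a bijection of $\mathbb{F}_q \times \mathbb{F}_q$ onto itself, and every ordered pair of symbols occurs in exactly one column. This establishes that the matrix is an $\textup{OA}(q, q)$. Simple as it is, this is the crux of the argument: the whole construction rests on the fact that the difference of two distinct field elements is invertible, which is exactly where the hypothesis that $q$ is a prime power enters.

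Finally, for the parallel class I would take the $q$ columns indexed by $(0, y)$ as $y$ ranges over $\mathbb{F}_q$. In row $a$ these columns carry the entries $a \cdot 0 + y = y$, so as $y$ runs over $\mathbb{F}_q$ each symbol appears exactly once in every row simultaneously. Thus these $q$ columns form a parallel class, completing the proof. I expect no genuine obstacle at this last step; the only care needed is to select a column set whose entries are independent of the row index, and the columns with first coordinate zero achieve this automatically.
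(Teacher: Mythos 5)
Your proof is correct and is essentially the construction the paper relies on: the paper states this proposition as a classical result without proof, but in the proof of Theorem \ref{main3} it uses exactly this array (entries $ij+a$ over $\mathbb{Z}_k$ for odd prime $k$, with the constant columns as the parallel class), and your version is the same linear construction carried out over $\mathbb{F}_q$ for an arbitrary prime power. The determinant argument for orthogonality and the choice of the columns $(0,y)$ as the parallel class are both sound.
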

For other known results of orthogonal arrays, we refer the reader to Colbourn and Dinitz \cite{HandbookCD} and references therein.

\begin{theorem}\label{main2}
If there exist a cyclic $(k+1)$-even-free $S(2,k,v)$ with $v \equiv 1 \pmod{k(k-1)}$,
cyclic $(k+1)$-even-free $S(2,k,w)$ with $w \equiv 1 \pmod{k(k-1)}$, and \textup{OA}$(k,k)$ with a parallel class,
then there exists a cyclic $(k+1)$-even-free $S(2,k,vw)$.
\end{theorem}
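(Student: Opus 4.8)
The plan is to reproduce the proof of Theorem~\ref{main1} essentially verbatim and to spend the orthogonal array at exactly the one point where that proof used the parity of $k$. First I would build, out of the given $\textup{OA}(k,k)$ and the cyclic $S(2,k,w)$, a cyclic $(w,k)$-difference matrix $M=(a_{i,j})$ normalized so that $a_{0,j}=0$ for all $j$, coordinatizing the $k$ block positions by $\textup{GF}(k)$---the array of Proposition~\ref{oa} being the one arising from $\textup{AG}(2,k)$---so that the $k$ rows of $M$ realize $k$ of the $k+1$ parallel classes of $\textup{AG}(2,k)$. With $M$ in hand, the two block families $\mathcal{D}_0$ (type~I, built from $\mathcal{B}$ through $M$) and $\mathcal{D}_1$ (type~II, built from $\mathcal{C}$) and the point set $X=\{0,1,\dots,vw-1\}$ are defined exactly as in Theorem~\ref{main1}, and the verification that $\mathcal{D}_0\cup\mathcal{D}_1$ develops into a cyclic $S(2,k,vw)$ is unchanged: every difference not divisible by $v$ is produced once through $M$, and every nonzero difference divisible by $v$ once through $\mathcal{C}$.

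For even-freeness I would suppose a generalized Pasch $\mathcal{P}$ occurs and split into the same cases. Every case in which $\mathcal{P}$ contains a type~II block is untouched, since those arguments use only that $M$ is a difference matrix---so that the same difference cannot occur twice between two rows---together with the $(k+1)$-even-freeness of $\mathcal{C}$; both remain valid, and these cases again rule out any $\mathcal{P}$ meeting a type~II block.

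The whole novelty is the case where all $k+1$ blocks of $\mathcal{P}$ are of type~I, precisely the case that forced $k$ to be even in Theorem~\ref{main1}. Reduction modulo $v$ again forces the $k+1$ blocks to share one remainder block $R=\{r_0,\dots,r_{k-1}\}\in\mathcal{B}$, so each block of $\mathcal{P}$ has the form $\{r_i+d_iv \mid 0\le i\le k-1\}$ with $d\in\textit{\textbf{Z}}_w^{\,k}$, and two such blocks meet exactly in the coordinates where their $d$-vectors agree. Because $M$ is a difference matrix, two distinct $d$-vectors agree in at most one coordinate, so each pair of blocks meets in at most one point for free; what a generalized Pasch needs in addition is that \emph{all} $\binom{k+1}{2}$ pairs actually meet. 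Reading the $d$-vectors as points of $\textup{AG}(2,k)$ through the coordinatization above, two blocks meet iff their points are collinear in one of the $k$ directions realized by the rows of $M$, and they fail to meet exactly when the line joining them lies in the single parallel class that the array omits. Since that omitted class splits the points into only $k$ parallel lines, the pigeonhole principle places two of the $k+1$ points of $\mathcal{P}$ on a common omitted line; the corresponding two blocks then do not meet, contradicting the definition of a generalized Pasch. This settles the case for every $k$, odd or even.

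The step I expect to be the genuine obstacle is the first: producing from the $\textup{OA}(k,k)$ and $\mathcal{C}$ a difference matrix whose columns truly inherit the geometry of $\textup{AG}(2,k)$, so that the meeting pattern of type~I blocks sharing a remainder block is governed by the omitted parallel class. The delicate point is that translating $\mathcal{P}$ over $\textit{\textbf{Z}}_{vw}$ leaves an independent additive shift in each $d$-vector, and I must ensure that adding these constants creates no spurious agreements that would let the $k+1$ points slip past the pigeonhole---equivalently, that the $\textup{GF}(k)$-coordinatization is compatible with the full development over $\textit{\textbf{Z}}_{vw}$ and not merely with the chosen representatives.
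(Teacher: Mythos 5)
There is a genuine gap, and it sits exactly where you yourself predicted: in the all-type-I case, which is the only new content beyond Theorem \ref{main1}. Your pigeonhole argument presupposes that the $k+1$ quotient vectors ($d$-vectors) of the blocks of $\mathcal{P}$ can be read as points of a \emph{single} copy of AG$(2,k)$ whose line directions are the rows of $M$. But a cyclic $(w,k)$-difference matrix has $w$ columns, and the only way to manufacture one from an OA$(k,k)$ and the cyclic design $(W,\mathcal{C})$ (this is what the paper does) is to glue together one truncated copy $L_X$ of the orthogonal array for each block orbit representative $X$ of $\mathcal{C}$, together with a zero column. Consequently the $k+1$ columns underlying the blocks of $\mathcal{P}$ may lie in different copies $L_X$, $L_{X'}$ (or one may be the zero column), and for such a pair ``collinear in one of the $k$ realized directions'' and ``lying on a common omitted line'' are simply undefined; the pigeonhole cannot even be formulated. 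Moreover, even for two columns inside the same copy, the development over $\textit{\textbf{Z}}_{vw}$ attaches an independent additive shift to each block, and two columns that disagree in every row (i.e., lie on a common omitted line) can still produce blocks that meet, since meeting at row $s$ only requires the difference of the row-$s$ entries to equal the difference of the two shifts. So the equivalence ``meet iff collinear in a realized direction,'' on which your whole Case 1 rests, is unproven and false in general; this is precisely the ``delicate point'' you flag at the end, and it does not go away. (A smaller issue: coordinatizing by GF$(k)$ assumes $k$ is a prime power, whereas the hypothesis supplies only an OA$(k,k)$ with a parallel class.)

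The paper resolves the odd-$k$ case by a different mechanism, and the key is the opposite normalization from yours. One must \emph{not} force $a_{0,j}=0$; instead $M$ is built so that the entries of every nonzero column form a block of $\mathcal{C}$ (each $L_X$ is taken on the symbol set $X$ itself). Then cyclicity of $\mathcal{C}$ is exactly what neutralizes the translation shifts: the quotient block of any type-I block is a translate of a block of $\mathcal{C}$, hence again a block of $\mathcal{C}$, or else a singleton coming from the zero column. In the all-type-I case the paper then argues: if the $k+1$ quotient blocks are distinct blocks of $\mathcal{C}$, they form an even configuration on $k+1$ blocks, contradicting the even-freeness of $\mathcal{C}$; if two of them coincide, all of them coincide, forcing a repeated symbol in a column of $L_X$; and if some quotient block is a singleton, deleting that block's points leaves a generalized Pasch on $k$ blocks of size $k-1$, whose points split into $k-1$ symmetric classes of size $k/2$ each --- impossible since $k$ is odd. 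The same parity-of-$(k-1)$ argument, rather than the difference-matrix argument of Theorem \ref{main1}, is also what the paper uses when $\mathcal{P}$ contains a type-II block, so that case is not quite ``untouched'' either. Your construction of the $S(2,k,vw)$ and your reduction to a common remainder block do match the paper; what is missing is the realization that the even-freeness and cyclicity of $\mathcal{C}$, not the geometry of the orthogonal array, must carry the burden in the final step.
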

\begin{proof}
If one of the two smaller Steiner $2$-designs is a trivial design of order one, then the statement is trivial.
We assume that the two ingredients are not trivial.
Let $v, w \equiv 1 \pmod{k(k-1)}$.
Take two sets $V = \{0, 1, \dots, v-1\}$ and $W = \{0, 1, \dots, w-1\}$ of nonnegative integers less than $v$ and $w$ respectively.
Let $(V, \mathcal{B})$ and $(W, \mathcal{C})$ be cyclic $(k+1)$-even-free Steiner $2$-designs of block size $k$ and order $v$ and $w$ respectively.
Take a system of representatives of all block orbits of $(V, \mathcal{B})$ and arbitrarily fix the order of points in each block,
so that we have a set $\mathcal{S}_{\mathcal{B}}$ of ordered sets.
Define a set $\mathcal{S}_{\mathcal{C}}$ of ordered sets the same way through a system of representatives of block orbits of $(W, \mathcal{C})$.
Let $k \times k^2$ matrix $K_S$ be an OA$(k,k)$ on $k$ symbols from set $S$ in which the last $k$ columns form a parallel class.
By renaming symbols, without loss of generality,
we assume that the $k$ entries of each column of the parallel class are the same.
For each $X \in \mathcal{S}_{\mathcal{C}}$,
define a $k \times k(k-1)-1$ matrix $L_X = (b_{i,j})$, $0 \leq i \leq k-1$, $0 \leq j \leq k(k-1)$,
by deleting the last $k$ columns from $K_X = (a_{i,j})$ on the $k$ symbols in the ordered set $X$.

Construct $k \times w$ matrix $M = (c_{i,j})$ by placing $L_X$ for all $X \in \mathcal{S}_{\mathcal{C}}$ side by side
and the $k$-dimensional zero vector as the first column.
It is straightforward to see that $M$ is a cyclic difference matrix over $\textit{\textit{Z}}_{w}$
in which the $k$ entries of each column except for the zero vector form a distinct block in $\mathcal{C}$.
As in the proof of Theorem \ref{main1}, define sets $\mathcal{D}_0$ and $\mathcal{D}_1$ of $k$-tuples as follows:
\begin{align*}
\mathcal{D}_0 &= \bigcup_{j \in W}\left\{\{x_i + c_{i,j}v \ \vert \ 0 \leq  i \leq k-1\} \ \middle\vert \ (x_0, x_1, \dots, x_{k-1}) \in \mathcal{S}_{\mathcal{B}}\right\},\\
\mathcal{D}_1 &= \left\{\{y_iv \ \vert \ 0 \leq i \leq k-1\} \ \middle\vert \ (y_0, y_1,\dots, y_{k-1}) \in \mathcal{S}_{\mathcal{C}}\right\}.
\end{align*}
We prove that the resulting cyclic $S(2,k,vw)$ over $\textit{\textbf{Z}}_{vw}$ contains no Pasch configurations if $k=3$
or generalized Pasch configurations if $k \geq 4$.
If $k$ is even, by adding $-c_{0,j}v$ modulo $vw$ to every point in the representative $\{x_i + c_{i,j}v \ \vert \ 0 \leq  i \leq k-1\}$ in $\mathcal{D}_0$ for each $j$,
the same argument as in the proof of Theorem \ref{main1} can be carried out to show the $(k+1)$-even-freeness of the $S(2,k,vw)$.
We assume that $k$ is odd.
As before, a block in an orbit represented by an element of $\mathcal{D}_0$ is of \textit{type} I while
a block in an orbit represented by an element of $\mathcal{D}_1$ is of \textit{type} II.
Any block  $B \in \mathcal{P}$ can be expressed in the form $\{t_i + u_iv \ \vert \ 0 \leq i \leq k-1\}$ with nonnegative integers $t_i$ less than $v$ and $u_i$ less than $w$.
As in the proof of the previous theorem, the remainder block of $B$ is the set $\{t_i \ \vert \ 0 \leq i \leq k-1\}$,
which is either a block in $\mathcal{B}$ or a singleton that consists of one point in $V$.
The quotient block of $B$ is $\{u_i \ \vert \ 0 \leq i \leq k-1\}$.
A quotient block is either a block in $\mathcal{C}$ or a singleton of a point in $W$.
We use these projections to prove $(k+1)$-even-freeness.
Suppose to the contrary that the resulting Steiner $2$-design contains a Pasch or generalized Pasch configuration $\mathcal{P}$.

\textit{Case 1.}\quad All blocks in $\mathcal{P}$ are of type I.
If the remainder blocks of the blocks in $\mathcal{P}$ are all distinct,
we obtain a Pasch or generalized Pasch configuration in $\mathcal{B}$, a contradiction.
Hence, we have a pair of blocks whose remainder blocks are the same.
Because every pair of blocks in $\mathcal{P}$ intersect each other and every pair of points in $V$ appear exactly once in a block in $\mathcal{B}$,
all $k+1$ blocks in $\mathcal{P}$ fall into the same remainder blocks.
Assume that there is a block in $\mathcal{P}$ whose quotient block is a singleton.
If there are two such blocks in $\mathcal{P}$, because every pair of blocks in $\mathcal{P}$ intersect each other,
all $k+1$ quotient blocks fall into the same singleton, leading to the contradiction that there are identical points among the $\frac{k(k+1)}{2}$ points in $\mathcal{P}$.
If there is exactly one block, say, $B$, whose quotient block is a singleton,
then the remaining $k$ quotient blocks fall into the same block in $\mathcal{C}$.
Delete all points in $B$ from the remaining $k$ blocks in $\mathcal{P}$ and form $k$ $(k-1)$-tuples.
The resulting $(k-1)$-tuples are isomorphic to Pasch or generalized Pasch on $k$ blocks of size $k-1$.
The points can be grouped into $k-1$ categories according to which point in the truncated quotient block they fall into.
However, because $k-1$ is even, the same double counting argument as in Case 1 of the proof of Theorem \ref{main1} leads to a contradiction.
Thus, all $k+1$ quotient blocks are blocks in $\mathcal{C}$.
If these quotient blocks are all distinct, we obtain a Pasch or generalized Pasch configuration in $\mathcal{C}$, a contradiction.
If there are two blocks whose quotient blocks are the same block in $\mathcal{C}$,
because every pair of blocks intersect each other in $\mathcal{P}$
and every pair of points appear exactly once in a block in a Steiner $2$-design,
the quotient blocks of all $k+1$ blocks in $\mathcal{P}$ are the same.
However, because each point in $\mathcal{P}$ appears exactly twice and the number of blocks in $\mathcal{P}$ is only $k+1$,
this implies that $L_X$ has a column in which the same symbol appears twice, a contradiction.

\textit{Case 2.}\quad $\mathcal{P}$ contains a block of type II.
If there are two blocks of type II, all blocks in $\mathcal{P}$ are of type II
while the quotient block of a block of type II is contained in $\mathcal{C}$.
Thus, it is either that $\mathcal{P}$ contains identical points or
that we have a Pasch or generalized Pasch configuration in $\mathcal{C}$, either of which is a contradiction.
Hence, we assume that $\mathcal{P}$ contains exactly one type II block, say, $B$.
Translating $\mathcal{P}$ by adding some element of $\textit{\textbf{Z}}_{vw}$ to each point,
without loss of generality, we assume that $x \equiv 0 \pmod{v}$ for any $x \in B$.
Take a block $B'$ of type I in $\mathcal{P}$ and write its remainder block as $\{r_i \ \vert \ 0 \leq i \leq k-1\} \in \mathcal{B}$, where $r_0 = 0$.
Because every remaining block intersects $B$ and $B'$ and every pair of points appear exactly once in a Steiner $2$-design,
the remainder blocks of the remaining $k-1$ blocks are also $\{r_i \ \vert \ 0 \leq i \leq k-1\}$.
Delete the points in $B$ from the remaining $k$ blocks and form a set $\mathcal{P}'$ of truncated blocks,
which is isomorphic to a Pasch or generalized Pasch configuration on $k$ blocks of size $k-1$.
The points in the $k$ truncated blocks are grouped into $k-1$ categories according to which nonzero point of $\{r_i \ \vert \ 1 \leq i \leq k-1\}$
they fall into when taken modulo $v$. However, because $k-1$ is even, it is impossible for each type of point to appear equal times
in a truncated block in $\mathcal{P}'$. The proof is complete.
\end{proof}

If the block size $k$ is an odd prime, the construction can be carried out under simpler, more relaxed conditions.
Note that in this case a $(k+1)$-even-free $S(2,k,v)$ is also $(k+2)$-even-free.
\begin{theorem}\label{main3}
Let $k$ be an odd prime. If there exist a cyclic $(k+2)$-even-free $S(2,k,v)$ with $v \equiv 1 \pmod{k(k-1)}$
and cyclic $(k+2)$-even-free $S(2,k,w)$,
then there exists a cyclic $(k+2)$-even-free $S(2,k,vw)$.
\end{theorem}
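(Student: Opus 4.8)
The plan is to run the construction behind Theorem \ref{main2} and to supply the one genuinely new ingredient, namely the treatment of the short block orbit that appears when $\mathcal{C}$ is not $\equiv 1 \pmod{k(k-1)}$. Since $k$ is an odd prime it is in particular a prime power, so Proposition \ref{oa} hands us an OA$(k,k)$ with a parallel class for free; and since the block size is odd, a $(k+1)$-even-free $S(2,k,\cdot)$ is the same object as a $(k+2)$-even-free one. Hence if $w \equiv 1 \pmod{k(k-1)}$ the conclusion is already Theorem \ref{main2}, and I may assume $w \equiv k \pmod{k(k-1)}$. In this regime $\mathcal{C}$ has exactly one short orbit, represented by $Z=\{iw/k \mid 0\le i\le k-1\}$, whose internal differences are precisely the nonzero multiples of $w/k$, each occurring $k$ times, while the full orbits account for every nonmultiple of $w/k$ exactly once, as recalled at the start of Section \ref{rconst}. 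Note also that $w=k(1+m(k-1))$ is odd here, so the even-$w$ obstruction flagged after Theorem \ref{main1} is absent.

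First I would manufacture a cyclic $(w,k)$-difference matrix $M$ directly from $\mathcal{C}$. For each full orbit I take the $k(k-1)$ nonconstant OA-columns exactly as in the proof of Theorem \ref{main2}; by strength two these contribute, between any two rows, each nonzero nonmultiple of $w/k$ once. To the short orbit I adjoin the $k-1$ columns whose $i$th entry is $imw/k$ for $m=1,\dots,k-1$, and I prepend the zero column. A direct count then gives exactly $w$ columns, and every nonzero column still has $k$ distinct entries forming a block of $\mathcal{C}$ (the new columns have entry set $Z$ because $\gcd(m,k)=1$). I then define $\mathcal{D}_0$ and $\mathcal{D}_1$ verbatim as before; the type II representative coming from $Z$ is $\{i\,vw/k \mid 0\le i\le k-1\}$, which is exactly the short orbit that any cyclic $S(2,k,vw)$ with $vw\equiv k\pmod{k(k-1)}$ is forced to contain. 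That $\mathcal{D}_0\cup\mathcal{D}_1$ represents a cyclic $S(2,k,vw)$ is then routine: type I realises every difference not divisible by $v$ once through $M$ and $\mathcal{B}$, while type II cuts out a copy of $\mathcal{C}$ on each coset of $v\,\textit{\textbf{Z}}_{vw}$.

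The step I expect to be the crux is checking that the adjoined columns really complete $M$ to a difference matrix, and this is precisely where primality of $k$ is indispensable. Between rows $r\ne r'$ the new column of slope $m$ contributes the difference $(r-r')mw/k$; since $r-r'$ is a unit modulo the prime $k$, the products $(r-r')m$ run over all nonzero residues as $m$ runs over $1,\dots,k-1$, so these $k-1$ columns deliver each nonzero multiple of $w/k$ exactly once. Together with the full-orbit columns and the zero column, every element of $\textit{\textbf{Z}}_w$ is obtained once, so $M$ is a cyclic $(w,k)$-difference matrix. Were $k$ only an odd prime power, some row difference $r-r'$ could be a zero divisor modulo $k$ and this covering would break down, which is the precise reason the statement is restricted to prime $k$.

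It remains to prove $(k+1)$-even-freeness, whence $(k+2)$-even-freeness is automatic from the oddness of $k$. Because $M$ is now a genuine difference matrix whose nonzero columns are blocks of $\mathcal{C}$ with distinct entries, the case analysis of the proof of Theorem \ref{main2} applies essentially unchanged, the short orbit causing no trouble. The one subcase deserving a second look is the branch of Case 1 in which all $k+1$ type I blocks share one remainder block and one quotient block: identifying each such block with the affine map $i\mapsto mi+n$ over $\textit{\textbf{Z}}_k$ induced by its column, two of them meet in exactly one point if and only if their slopes differ, so a generalized Pasch would need $k+1$ distinct slopes; but only the $k-1$ nonzero slopes occur, a contradiction. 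Equivalently, this forbidden configuration would be an even configuration on $k+1<2k$ blocks inside an affine plane of order $k$, which is impossible by Theorem \ref{AG}. Every remaining branch, including the single-type-II analysis of Case 2 with its parity obstruction from $k-1$ being even, transfers directly, so the resulting cyclic $S(2,k,vw)$ is $(k+2)$-even-free, as required.
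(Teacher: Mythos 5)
Your proposal is correct and takes essentially the same route as the paper: after the same reduction to $w \equiv k \pmod{k(k-1)}$ via Theorem \ref{main2}, your adjoined short-orbit columns (with $i$th entry $imw/k$, together with the zero column) are precisely the paper's matrix $\frac{w}{k}K_0$, so your cyclic $(w,k)$-difference matrix and the sets $\mathcal{D}_0$, $\mathcal{D}_1$ coincide with those constructed there. The only differences are presentational: you spell out the verifications the paper calls ``easy'' and ``routine,'' and since your slope argument in the final paragraph relies on the columns being affine maps over $\textit{\textbf{Z}}_k$, you should fix the affine orthogonal array $a_{i,j} = ij + a$ explicitly (as the paper does) rather than invoke Proposition \ref{oa} generically.
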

\begin{proof}
Let $k$ be an odd prime and $v$ a positive integer congruent to one modulo $k(k-1)$.
Let $(V, \mathcal{B})$ and $(W, \mathcal{C})$ be a cyclic $(k+2)$-even-free $S(2,k,v)$ and a cyclic $(k+2)$-even-free $(2,k,w)$ respectively.
For each $0 \leq a \leq k-1$, define a $k \times k$ matrix $K_a = (a_{i,j})$ by $a_{i,j} = ij + a \pmod{k}$.
Let $L$ be a $k \times k^2$ matrix obtained by placing $K_a$ for all $k$ side by side.
Because $k$ is an odd prime, it is easy to see that $L$ is an OA$(k,k)$ with a parallel class.
Hence, if $w \equiv 1 \pmod{k(k-1)}$, by Theorem \ref{main2} we obtain a cyclic $(k+2)$-even-free $S(2,k,vw)$ as desired.
We assume that $w \equiv k \pmod{k(k-1)}$.
Delete from $L$ the parallel class that consists of the $k$ columns $(i, i, \dots, i)^T$, $0 \leq i \leq k-1$.
As in the proof of Theorem \ref{main2},
take two sets $\mathcal{S}_{\mathcal{B}}$ and $\mathcal{S}_{\mathcal{C}}$ of ordered sets,
where the former is obtained by arbitrarily fixing the order of the points in the representative of each block orbit of $(V, \mathcal{B})$
while the latter is defined the same way by using $(W, \mathcal{C})$.
Let $S \in \mathcal{S}_{\mathcal{C}}$ be the one that corresponds to the representative of the orbit $Orb_{\textit{\textbf{Z}}_{w}}(\{\frac{iw}{k} \ \vert \ 0 \leq i \leq k-1\})$.
For each $X \in \mathcal{S}_{\mathcal{C}}\setminus S$ of the $\frac{w-k}{k(k-1)}$ ordered sets,
construct a copy $L_X = (b_{i,j})$ of the truncated OA by taking $X$ as its symbol set.
Place all $L_X$ side by side.
Place also the $k \times k$ matrix $\frac{w}{k}K_0$ on the symbol set $W$ obtained by multiplying each entry of $K_0$ by $\frac{w}{k}$.
It is easy to see that the resulting $k \times w$ matrix $M = (c_{i,j})$ forms a cyclic difference matrix over $\textit{\textbf{Z}}_{w}$.
Define sets $\mathcal{D}_0$ and $\mathcal{D}_1$ of $k$-tuples as follows:
\begin{align*}
\mathcal{D}_0 &= \bigcup_{j \in W}\left\{\{x_i + c_{i,j}v \ \vert \ 0 \leq  i \leq k-1\} \ \middle\vert \ (x_0, x_1, \dots, x_{k-1}) \in \mathcal{S}_{\mathcal{B}}\right\},\\
\mathcal{D}_1 &= \left\{\{y_iv \ \vert \ 0 \leq i \leq k-1\} \ \middle\vert \ (y_0, y_1, \dots, y_{k-1}) \in \mathcal{S}_{\mathcal{C}}\right\}.
\end{align*}
It is routine to check that $\mathcal{D}_0\cup\mathcal{D}_1$ forms a system of representatives of all block orbits of a cyclic $(k+2)$-even-free $S(2,k,vw)$.
\end{proof}

All three constructions given above require at least one Steiner $2$-design of block size $k$ to be of order congruent to $1$ modulo $k(k-1)$.
If $k=3$, the following recursive construction can take advantage of two ingredients both of which have orders congruent to $3$ modulo $6$:
\begin{theorem}\label{main4}
If there exist a cyclic $5$-even-free $S(2,3,3v)$ and cyclic $5$-even-free $S(2,3,3w)$,
then there exists a cyclic $5$-even-free $S(2,3,3vw)$.
\end{theorem}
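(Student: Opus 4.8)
The plan is to produce a system of representatives of the block orbits of the desired design directly on $\textit{\textbf{Z}}_{3vw}$, in the style of the proofs of Theorems \ref{main1}--\ref{main3}, the essential new feature being that the two short orbits supplied by the hypotheses must be fused into the single short orbit of the output. Write $(\textit{\textbf{Z}}_{3v}, \mathcal{B})$ and $(\textit{\textbf{Z}}_{3w}, \mathcal{C})$ for the two given cyclic $5$-even-free Steiner triple systems; their short orbits are represented by $\{0,v,2v\}$ and $\{0,w,2w\}$, while the target needs the short orbit $\{0,vw,2vw\}$. Since $3v, 3w \equiv 3 \pmod 6$, both $v$ and $w$ are odd, so by Theorem \ref{dm} there is a cyclic $(w,3)$-difference matrix; I would take the explicit one $M = (m_{i,j})$ with $m_{i,j} = ij \bmod w$, whose rows are $0, j, 2j$ and whose first row is zero.

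Let $\mathcal{S}_{\mathcal{B}}$ be a set of ordered representatives of the \emph{long} orbits of $\mathcal{B}$ (the short orbit being discarded) and $\mathcal{S}_{\mathcal{C}}$ a set of ordered representatives of all orbits of $\mathcal{C}$. I would then set
\[
\mathcal{D}_0 = \bigcup_{j=0}^{w-1}\left\{\{x_i + 3v\, m_{i,j} \ \vert \ 0 \leq i \leq 2\} \ \middle\vert \ (x_0,x_1,x_2) \in \mathcal{S}_{\mathcal{B}}\right\},
\]
\[
\mathcal{D}_1 = \left\{\{v y_i \ \vert \ 0 \leq i \leq 2\} \ \middle\vert \ (y_0,y_1,y_2) \in \mathcal{S}_{\mathcal{C}}\right\}.
\]
The blocks in $\mathcal{D}_1$ realise $\mathcal{C}$ on the subgroup $v\textit{\textbf{Z}}_{3vw}$ of multiples of $v$, so they cover every nonzero multiple of $v$ exactly once, except $vw$ and $2vw$, which come three times from the scaled short orbit of $\mathcal{C}$; thus $\{0,vw,2vw\}$ is the short orbit of the result. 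For a fixed long representative of $\mathcal{B}$ and a fixed pair of rows, letting $j$ range makes $(x_i - x_{i'}) + 3v(m_{i,j}-m_{i',j})$ run once over the whole coset of $3v\textit{\textbf{Z}}_{3vw}$ above $x_i - x_{i'}$, so $\mathcal{D}_0$ covers every difference that is not a multiple of $v$ exactly once. Consequently $\mathcal{D}_0 \cup \mathcal{D}_1$ represents a cyclic $S(2,3,3vw)$; discarding the short orbit of $\mathcal{B}$ is correct precisely because the differences $v, 2v$ it would carry are already accounted for in $\mathcal{D}_1$. This is exactly the step where the two factors of three coincide.

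It then remains to forbid Pasch configurations (a $4$-even-free $S(2,3,v)$ being automatically $5$-even-free). Call a block of $\mathcal{D}_0$ type I and one of $\mathcal{D}_1$ type II; since the three points of a long block of $\mathcal{B}$ have distinct residues modulo $v$, a type I block contains at most one multiple of $v$, whereas a type II block consists entirely of multiples of $v$. Suppose a Pasch $\mathcal{P}$ existed. If $\mathcal{P}$ had two type II blocks then, as any type I block can meet at most one of them, all four would be type II, and dividing their points by $v$ would exhibit a Pasch in $\mathcal{C}$, a contradiction. If $\mathcal{P}$ had exactly one type II block, reducing modulo $3v$ sends it into the short block $\{0,v,2v\}$ of $\mathcal{B}$ and sends the three type I blocks to long blocks of $\mathcal{B}$; after deleting the point each shares with the type II block one is left with three blocks of size two whose residues fall into the two classes cut out by $\{0,v,2v\}$, and the parity step used in Theorems \ref{main1}--\ref{main3} (here $k-1 = 2$ is even) gives a contradiction. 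If all four blocks of $\mathcal{P}$ are type I, reduce modulo $3v$: if the four remainder blocks are distinct they form a Pasch in $\mathcal{B}$ (the concurrent alternative being excluded because type I differences are never multiples of $v$), again a contradiction.

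The step I expect to be the main obstacle is the surviving all-type-I case in which the four remainder blocks coincide. Here the four blocks collapse to four translated columns $i \mapsto m_{i,j_l} + \sigma_l = i j_l + \sigma_l$ of $M$, and I must show that four such columns never form a Pasch on $\{0,1,2\}\times\textit{\textbf{Z}}_w$. Unlike Theorems \ref{main2}--\ref{main3}, where the difference matrix is assembled from blocks of $\mathcal{C}$ so that such a Pasch would already live in $\mathcal{C}$, here $M$ is a free difference matrix and the claim must be proved by hand: demanding that each of the three rows carry exactly two of the six points pairs up the shifts $\sigma_l$, and the middle and bottom rows then force a relation of the form $2(j_l - j_{l'}) = 0$ with $j_l \neq j_{l'}$, which is impossible because $w$ is odd. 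This is the one place where the congruence $3w \equiv 3 \pmod 6$ is genuinely consumed, and verifying it --- together with the careful bookkeeping of the degenerate projections created by the fusion of the two short orbits --- is where the real work of the proof lies.
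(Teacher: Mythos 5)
Your construction is the paper's own: the blocks $\{x_0,\, x_1+3vj,\, x_2+6vj\}$ obtained from your matrix $m_{i,j}=ij$ are exactly the paper's $\mathcal{D}_0$, your $\mathcal{D}_1$ is its $\mathcal{D}_1$, and both your all-type-I case (ending in the relation $2(j_l-j_{l'})\equiv 0 \pmod w$, impossible for $w$ odd) and your two-type-II subcase agree with the paper's argument. The genuine gap is in the subcase of exactly one type II block $B$. First, after translating so that every point of $B$ is a multiple of $v$, the reduction of $B$ modulo $3v$ need not be the short block $\{0,v,2v\}$: $B$ is $v\cdot\{a,b,c\}$ for a block $\{a,b,c\}$ of $\mathcal{C}$, and nothing prevents two or all three of $a,b,c$ from being congruent modulo $3$, so the projection can be a degenerate multiset such as $\{s,s,s\}$ or $\{s,s,s\pm v\}$. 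Second, and more seriously, in the nondegenerate case you do treat, the parity step you import from Theorems \ref{main1}--\ref{main3} does not apply. That step requires the type I blocks to share a common remainder block, so that the truncated points split into classes with exactly one point per class in each block; here the three points of $B$ reduce to the three \emph{distinct} values $0,v,2v$, so the three type I blocks have three pairwise distinct remainder blocks (a common one would contain two points at difference $v$, impossible for a long-orbit block), there are no common ``two classes'', and in fact there is no parity obstruction at all: the modulo-$3v$ projection of $\mathcal{P}$ is an honest Pasch in $\mathcal{B}$ through the short-orbit block $\{0,v,2v\}$, of the shape $\{0,v,2v\},\{0,a,b\},\{v,a,c\},\{2v,b,c\}$, and the only thing that excludes it is the assumed Pasch-freeness of $\mathcal{B}$.

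In other words, you have the two mechanisms inverted. The paper splits this case according to the projection pattern of $B$: in the degenerate patterns $\{s,s,s\}$ and $\{s,s,s\pm v\}$, intersecting blocks are forced to share remainder blocks and one ends up with a type I block having two points congruent modulo $3v$, which is impossible since type I differences are never divisible by $v$ --- this is where your parity-style counting legitimately lives; in the pattern $\{s-v,s,s+v\}$, the projection is a Pasch configuration in $\mathcal{B}$ containing the short-orbit block, contradicting the $5$-even-freeness of $\mathcal{B}$. This last point is essential: the hypothesis on $\mathcal{B}$ is consumed a second time here, through Pasch configurations that pass through $\{0,v,2v\}$, and no amount of counting replaces it. As written, your Case 2 proves nothing in the nondegenerate case and omits the degenerate ones; it must be redone along the above lines, while the rest of your proposal is sound and coincides with the paper.
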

\begin{proof}
Let $(V, \mathcal{B})$ and $(W, \mathcal{C})$ be a cyclic $5$-even-free $(2,3,3v)$ and a cyclic $5$-even-free $S(2,3,3w)$ respectively.
Take a system of representatives of block orbits of $(V, \mathcal{B})$ and arbitrarily fix the order of points in each representative,
so that we obtain a set $\mathcal{S}_{\mathcal{B}}$ of ordered triples.
Let $S \in \mathcal{S}_{\mathcal{B}}$ be the ordered triple that comes from the representative of $Orb_{\textit{\textbf{Z}}_{3v}}(\{0,v,2v\})$.
Let $\mathcal{S}_{\mathcal{C}}$ be a system of representatives of block orbits of $(W, \mathcal{C})$.
Define a pair, $\mathcal{D}_0$, $\mathcal{D}_1$, of sets of triples as follows:
\begin{align*}
\mathcal{D}_0 &= \bigcup_{i \in W}\left\{\{x, y + 3iv, z + 6iv \} \ \middle\vert \ (x, y, z) \in \mathcal{S}_{\mathcal{B}}\setminus S\right\},\\
\mathcal{D}_1 &= \left\{\{av, bv, cv\} \ \middle\vert \ \{a, b, c\} \in \mathcal{S}_{\mathcal{C}}\right\}.
\end{align*}
It is straightforward to check that $\mathcal{D}_0\cup\mathcal{D}_1$ forms a system of representatives of the block orbits of a cyclic $S(2,3,3vw)$.
A block is of type I if its orbit is represented by an element of $\mathcal{D}_0$. Otherwise it is of type II.
We prove that the resulting cyclic $S(2,3,3vw)$ avoids Pasch configurations.

\textit{Case 1.}\quad All blocks in $\mathcal{P}$ are of type I.
Reduce every point in $\mathcal{P}$ by taking modulo $v$.
If we obtain four distinct blocks, we end up with a Pasch configuration in $\mathcal{S}_{\mathcal{B}}\setminus S$, a contradiction.
If there are two blocks that are projected to the same block in $\mathcal{S}_{\mathcal{B}}\setminus S$,
then all four blocks in $\mathcal{P}$ fall into that same block.
Thus, without loss of generality, $\mathcal{P}$ can be written as
$\{\{x+t, y+t+3iv, z+t+6iv\}, \{x+t, y+t+3jv, z+t+6jv\}, \{x+t+kv, y+t+3iv, z+t+6jv\}, \{x+t+kv, y+t+3jv, z+t+6iv\}\}$,
where $(x,y,z) \in \mathcal{S}_{\mathcal{B}}\setminus S$ and $0 \leq i \not= j \leq w-1$.
Then, by the definition of $\mathcal{D}_0$, we have
\[k+2j \equiv 2i \pmod{w}\]
and
\[k+2i \equiv 2j \pmod{w}.\]
Because $w$ is odd, we have $i = j$, a contradiction.

\textit{Case 2.}\quad $\mathcal{P}$ contains a type II block.
If there are two blocks of type II in $\mathcal{P}$, by considering $x \pmod{v}$ for every point $x \in \mathcal{P}$,
all four blocks are of type II. However, projecting each point to an element of $\textit{\textbf{Z}}_{3w}$ through taking the quotient by $v$,
we obtain a Pasch configuration in $\mathcal{C}$, a contradiction.
Hence, we assume that $\mathcal{P}$ contains exactly one block of type II, say, $B$.
Translate $B$ to a block $B'$ by adding an appropriate element of $\textit{\textbf{Z}}_{3vw}$, so that $x \equiv 0 \pmod{v}$ for any $x \in B'$.
Reducing each point in $B'$ modulo $3v$, we obtain a list $L$ of size three.
$L$ is either $\{s,s,s\}$, $\{s,s,s+v\}$, $\{s,s,s-v\}$ or $\{s-v,s,s+v\}$ for some $0 \leq s \leq v-1$.
By considering which element in $\textit{\textbf{Z}}_{3v}$ each of the other three points shared by blocks of type I is reduced to when taken modulo $3v$,
the former three cases result in the contradiction that $\mathcal{S}_{\mathcal{B}}$ contains an ordered pair.
In the latter case, because $\{s-v,s,s+v\}$ is a block in $\mathcal{B}$, we obtain a Pasch configuration in $\mathcal{B}$.
This final contradiction completes the proof.
\end{proof}

\section{Concluding remarks}\label{conclusion}
\noindent
The constructions presented in the previous section generate infinitely many cyclic Steiner $2$-designs
with even-freeness higher than the trivial lower bound when combined with the known constructions and immediate results given in Section \ref{prep}.
Known examples found by computer searches are a good source of ingredients for our recursive constructions as well.
For instance, Colbourn, Mendelsohn, Rosa, and \v{S}ir\'{a}\v{n}\cite{CMRS} found a cyclic $S(2,3,v)$ avoiding Pasch configurations for
all admissible orders between $15$ and $97$ including $v = 21, 27, 63, 81$.
Applying Theorem \ref{main4} to these examples and those given in Proposition \ref{bose} substantially narrows the range of possible exceptions
of those orders for which a cyclic $5$-even-free $S(2,3,v)$ exists.

With all the infinitely many examples,
it may not be terribly surprising if one feels that it is plausible for a cyclic $(k+1)$-even-free $S(2,k,v)$ to exist
for all $v \equiv 1, k \pmod{k(k-1)}$ with only finitely many exceptions for each $k$.
However, proving or disproving this conjecture seems to be quite difficult.
In fact, it is already a formidable task to prove or disprove the existence of an $S(2,k,v)$ for all sufficiently large $v \equiv 1, k \pmod{k(k-1)}$
that are cyclic \textit{or} $(k+1)$-even-free.

Another ambitious goal is to understand which even configuration is avoidable in a Steiner $2$-design and which is not.
It appears that for any $k$ there exists a constant $v_k$ such that there exists a maximum $2$-$(v,k,1)$ packing containing no Pasch or generalized Pasch configurations.
However, we know little about larger even configurations except that
even configurations on $2k-1$ or fewer blocks can be simultaneously avoidable in projective geometry.
It is an interesting question whether nontrivial cyclic $S(2,k,v)$s with $k \geq 4$ can avoid all even configurations on $2k-1$ or fewer blocks.
If proven in the affirmative,
that would mean that cyclic automorphisms and extremely high even-freeness do not characterize the points and lines of projective geometry.

We took advantage of geometric structures shared among Pasch and generalized Pasch configurations across all block sizes
in order to make our construction techniques more general in terms of applicable block sizes.
While this allowed for arguments that work for a wide variety of block sizes,
a similar strategy, if exists for larger configurations in general, may require a careful classification of nonisomophic configurations in an $S(2,k,v)$
according to subtler geometric properties. Such a classification would be intriguing.

\providecommand{\bysame}{\leavevmode\hbox to3em{\hrulefill}\thinspace}
\providecommand{\MR}{\relax\ifhmode\unskip\space\fi MR }
\providecommand{\MRhref}[2]{%
  \href{http://www.ams.org/mathscinet-getitem?mr=#1}{#2}
}
\providecommand{\href}[2]{#2}

\affiliationone{Yuichiro Fujiwara\\
Mathematics 253-37\\
California Institute of Technology\\
Pasadena, California 91125\\
United States of America
   \email{yuichiro.fujiwara@caltech.edu}}
\end{document}